\newcommand{\C}{\ensuremath{\mathbb{C}}}
\newcommand{\Cv}[1]{\mathcal{C}_{#1}}
\newcommand{\Cs}[1]{\mathcal{D}_{#1}}
\numberwithin{equation}{section}
\newcommand{\I}{{\rm i}}
\newcommand{\Id}{\mathbbm{1}}
\newcommand{\N}{\mathbb{N}}
\newcommand{\EE}{\ensuremath{\mathbb{E}}}
\newcommand{\PP}{\ensuremath{\mathbb{P}}}
\newcommand{\R}{\ensuremath{\mathbb{R}}}
\newcommand{\Z}{\ensuremath{\mathbb{Z}}}
\newcommand{\Rplus}{\ensuremath{\mathbb{R}_{+}}}
\newcommand{\Zsd}{\ensuremath{\mathbf{Z}}}
\newcommand{\Zsdaalpha}{\ensuremath{\mathbf{Z}^{a,\alpha}}}
\newcommand{\Zsdbbeta}{\ensuremath{\mathbf{Z}^{b,\beta}}}
\newcommand{\Zc}{\ensuremath{\mathcal{Z}}}
\newcommand{\Zbbeta}{\ensuremath{\mathcal{Z}^{b,\beta}}}
\newcommand{\Fc}{\ensuremath{\mathcal{F}}}
\newcommand{\Fbbeta}{\ensuremath{\mathcal{F}^{b,\beta}}}
\newcommand{\Fsigbbeta}{\ensuremath{\mathcal{F}^{\sigma b,\sigma\beta}}}
\newcommand{\Fsdaalpha}{\ensuremath{\mathbf{F}^{a,\alpha}}}
\newcommand{\K}[1]{\mathrm{K}_{#1}}
\newcommand{\Kbbeta}{\mathrm{K}_{b,\beta}^{(\sigma)}}
\newcommand{\Ksigbbeta}{\mathrm{K}_{\sigma b,\sigma\beta}^{(\sigma)}}
\newcommand{\BP}{\mathrm{BP}}
\newcommand{\fredholmp}[2]{\det\left(\Id+#1\right)_{#2}}
\newcommand{\fredholmm}[2]{\det\left(\Id-#1\right)_{#2}}
\newcommand{\LC}{L^2(\Cv{a;\alpha;\varphi})}
\newcommand{\LRplus}{L^2(\Rplus)}
\newcommand{\Lrinf}{L^2(r,\infty)}
\newcommand{\ordo}{\mathcal{O}}
\newcommand{\wt}{\widetilde}
\renewcommand{\d}{\mathrm d}
\renewcommand{\Re}{\operatorname{Re}}
\renewcommand{\Im}{\operatorname{Im}}
\newtheorem{theorem}{Theorem}[section]
\newtheorem{proposition}[theorem]{Proposition}
\newtheorem{lemma}[theorem]{Lemma}
\newtheorem{definition}[theorem]{Definition}
\title{Borodin--P\'ech\'e fluctuations of the free energy in directed random polymer models}
\author{Zs\'ofia Talyig\'as\thanks{Department of Mathematical Sciences, University of Bath, Claverton Down, Bath, UK. E-mail: {\tt z.talyigas@bath.ac.uk}}
\and B\'alint Vet\H o\thanks{Institute of Mathematics, Budapest University of Technology and Economics, Egry J.\ u.\ 1, 1111 Budapest, Hungary. E-mail: {\tt vetob@math.bme.hu}}}
\date{}
\begin{document}
\maketitle
\begin{abstract}
We consider two directed polymer models in the Kardar--Parisi--Zhang (KPZ) universality class:
the O'Connell--Yor semi-discrete directed polymer with boundary sources and the continuum directed random polymer with $(m,n)$-spiked boundary perturbations.
The free energy of the continuum polymer is the Hopf--Cole solution of the KPZ equation with the corresponding $(m,n)$-spiked initial condition.
This new initial condition is constructed using two semi-discrete polymer models with independent bulk randomness and coupled boundary sources.
We prove that the limiting fluctuations of the free energies rescaled by the $1/3$rd power of time in both polymer models
converge to the Borodin--P\'ech\'e type deformations of the GUE Tracy--Widom distribution.
\end{abstract}

\section{Introduction}
\label{Intro}

The Kardar--Parisi--Zhang (KPZ) equation was introduced for the description of physical surface growth phenomena in~\cite{KPZ86}.
The equation gives the stochastic evolution of the height function $\Fc(T,X)$ where $T \in \Rplus$ is the time and $X\in\R$ is the space variable.
It reads as
\begin{equation}\label{kpz}
\partial_t \Fc(T,X) = \frac12 \partial_X^2 \Fc(T,X) + \frac12 (\partial_X \Fc(T,X))^2 + \xi(T,X), \qquad \Fc(0,X) = \Fc_0(X)
\end{equation}
where $\xi$ denotes space-time Gaussian white noise with $\EE\left[ \xi(T,X)\xi(S,Y) \right] = \delta(T-S)\delta(X-Y)$.
By the presence of the non-linear term, the equation is not rigorously well-posed and serious work is required to make sense of the solution directly~\cite{Hai11}.
A natural way to give a solution to the equation formally is via the stochastic heat equation (SHE) with multiplicative noise
\begin{equation}\label{she}
\partial_T \Zc(T,X)=\tfrac12\partial_X^2 \Zc(T,X)+\Zc(T,X)\xi(T,X), \qquad \Zc(0,X) = \Zc_0(X).
\end{equation}
The latter equation is well-posed and $\Fc(T,X)=\ln\mathcal Z(T,X)$ with initial condition $\Fc(0,X)=\ln\Zc(0,X)$ defines a formal solution to \eqref{kpz} which is the Hopf--Cole solution of the KPZ equation.
See~\cite{Cor11} for a review on the KPZ equation and its universality class which is the family of models with the same scaling and asymptotic behaviour as the solution of the KPZ equation.

The Hopf--Cole solution of the KPZ equation can be understood as the partition function of a directed polymer model by the Feynman--Kac representation
\begin{equation}\label{feynman}
\Zc(T,X) = \EE_{B(X)}\left[ \Zc_0(B(0)) : \exp : \left\{ \int_{0}^{T}\xi(t,B(t))dt \right\} \right]
\end{equation}
where the expectation $\EE$ is taken over the law of a Brownian motion $B$ which is running backwards from time $T$ and position $X$ and where $: \exp :$ is the Wick exponential.
The representation \eqref{feynman} defines the partition function of the continuum directed random polymer (CDRP)
as it is the total weight of Brownian paths where the weight is proportional to the exponential function of the integral of the disorder along the path.
The logarithm of the partition function $\Fc(T,X)=\ln\Zc(T,X)$ is called the free energy of the CDRP.

The present paper describes limiting fluctuations in two directed polymer models.
Directed polymers are well-studied objects in the KPZ universality class of models in the recent mathematics and physics literature.
The reason for the special interest is that certain models possess exact solvable properties, i.e.\ explicit formulas can be derived for some of their important observables.
The first directed polymer model with exact solvability is the O'Connell--Yor semi-discrete polymer~\cite{OCY01,OCon09}.
Exactly solvable polymers on the square lattice are the log-gamma polymer~\cite{Sep09,COSZ11,LeDoussalThierry}, the strict-weak polymer~\cite{CSS15,OCO15},
the beta polymer~\cite{BarCor15} and the inverse beta polymer~\cite{ThiDou15}.
Methods to obtain exact solvability include explicit stationary measure, Bethe Ansatz integrability and the (geometric) Robinson--Schensted--Knuth (RSK) correspondence.

In~\cite{BCF12}, the O'Connell--Yor model was considered with boundary perturbations.
The large time limit of the free energy was proved to be the Baik--Ben Arous--P\'ech\'e (BBP) distribution~\cite{BBP06} which is the perturbed version of the GUE Tracy--Widom distribution.
A similar limit distribution was obtained for the CDRP with $m$-spiked boundary perturbation in~\cite{BCF12}.

The results of the present paper generalize those of~\cite{BCF12} in the following sense.
We investigate the large scale behaviour of the free energy of two directed polymer models.
The first model is the O'Connell--Yor semi-discrete random polymer with log-gamma boundary sources~\cite{BCFV15}
which is the mixture of the O'Connell--Yor semi-discrete polymer with boundary perturbations considered in~\cite{BCF12} and the log-gamma discrete directed polymer.
Explicit Fredholm determinant expressions are available in~\cite{BCFV15} for the Laplace transform of the partition function of the polymer mixture model.
Based on these formulas, we obtain the single time version of the Borodin--P\'ech\'e distribution as the limit distribution of the free energy.
The Borodin--P\'ech\'e distribution which is a generalization of the BBP distribution was first described in its multi-time version in last passage percolation with defective rows and columns
and in a single time version in a random matrix model in~\cite{BorodinPeche08}.

A closely related model is the stationary O'Connell--Yor polymer model which was considered in~\cite{IS17} as the limit of the O'Connell--Yor semi-discrete polymer model with log-gamma boundary sources.
It was proved in~\cite{IS17} that the large time limit of the stationary model is the Baik--Rains distribution
and the solution of the stationary KPZ equation was obtained as the scaling limit of the stationary O'Connell--Yor polymer.

The second model considered in the present paper is the CDRP which can be obtained as the limit of the O'Connell--Yor semi-discrete polymer under the intermediate disorder scaling~\cite{QMR12,Nic16}.
Extending the investigations of the CDRP with $m$-spiked boundary perturbation in~\cite{BCF12}, we introduce the $(m,n)$-spiked boundary perturbation.
The $m$-spiked boundary perturbation is non-zero for the positive values of the space variable,
the $(m,n)$-spiked boundary perturbation can be seen as its two-sided version with the appropriate coupling of the two sides.
We prove Borodin--P\'ech\'e limit distribution for the free energy of the CDRP with $(m,n)$-spiked boundary perturbation based on explicit Fredholm determinant formulas from~\cite{BCFV15}.

The rest of the paper is organized as follows.
We introduce the O'Connell--Yor semi-discrete directed random polymer with log-gamma boundary sources and the CDRP with $(m,n)$-spiked boundary perturbation in Section~\ref{s:modelsresults}.
Our main results, Theorem~\ref{ThmFreeEnergyScalingLimit} and Theorem~\ref{ThmTimeLimitGen} are also stated in this section.
We prove Theorem~\ref{ThmFreeEnergyScalingLimit} in Section~\ref{s:OYlimit} and Theorem~\ref{ThmTimeLimitGen} in Section~\ref{s:CDRPlimit}.

\paragraph{Acknowledgements.}
We thank Patrik Ferrari and Benedek Valk\'o for stimulating discussions related to this project.
The work of both authors was supported by the NKFI (National Research, Development and Innovation Office) grant FK123962.
The work of B.\ Vet\H o was supported by the NKFI grant PD123994, by the Bolyai Research Scholarship of the Hungarian Academy of Sciences
and by the \'UNKP--18--4 New National Excellence Program of the Ministry of Human Capacities.

\section{Models and main results}
\label{s:modelsresults}

We present the two models considered in this paper: the O'Connell--Yor semi-discrete directed polymer with log-gamma boundary sources
and the continuum directed random polymer (CDRP) with $(m,n)$-spiked boundary perturbation.
These models were defined in~\cite{BCFV15}, but the $(m,n)$-spiked boundary perturbation is new.
We consider a slightly different scaling of the boundary perturbations as in~\cite{BCFV15} yielding our main results which are stated in this section.

\subsection{O'Connell--Yor semi-discrete directed polymer with log-gamma boundary sources}

The O'Connell--Yor semi-discrete directed polymer with log-gamma boundary sources is the mixture of the semi-discrete polymer model introduced by O'Connell and Yor~\cite{OCY01}
and the discrete one by Sepp\"{a}l\"{a}inen~\cite{Sep09}.
By log-gamma distribution with parameter $\theta>0$ we mean the distribution of the random variable $-\ln X$
where $X$ has gamma distribution with parameter $\theta$, i.e.\ when $X$ has density $x^{\theta-1}e^{-x}/\Gamma(\theta)$ for $x>0$.

Fix $N\geq 1$ and $n\geq 0$. Let $a=(a_1,\ldots,a_N)\in \R^N$ and $\alpha = (\alpha_1,\ldots,\alpha_n)\in\R_+^n$ be such that $\alpha_k-a_l>0$ for all $1\leq l\leq N$ and $1\leq k\leq n$.
In the polymer model that we introduce, the horizontal axis is discrete on the left of $0$ and continuous on the right of $0$ while the vertical axis is discrete.
For all $1\leq k\leq n$ and $1\leq l\leq N$, let $\omega_{-k,l}$ be independent log-gamma random variables with parameter $\alpha_k-a_l$.
For all $1\leq l\leq N$, let $B_l$ be independent Brownian motions with drift $a_l$ which are also independent of the log-gamma variables.
The $\omega_{-k,l}$ can be thought of as sitting at the lattice points $(-k,l)$ while $B_l$ can be thought of as sitting along the horizontal ray from $(0,l)$ as shown on Figure~\ref{FigSemiDiscreteDP}.

\begin{figure}\begin{center}
\psfrag{w11}[cc]{$\omega_{-1,1}$}
\psfrag{w21}[cc]{$\omega_{-2,1}$}
\psfrag{wM1}[cc]{$\omega_{-n,1}$}
\psfrag{w1N}[bc]{$\,\omega_{-1,N}$}
\psfrag{w2N}[bc]{$\!\omega_{-2,N}$}
\psfrag{wMN}[bc]{$\omega_{-n,N}$}
\psfrag{tn}[bc]{$(\tau,N)$}
\psfrag{B1}[lc]{$B_1$}
\psfrag{B2}[lc]{$B_2$}
\psfrag{B3}[lc]{$B_3$}
\psfrag{BN}[lc]{$B_N$}
\psfrag{s2}[cc]{$s_2$}
\psfrag{s3}[cc]{$s_3$}
\psfrag{sNm1}[cc]{$s_{N-1}$}
\psfrag{t}[cc]{$\tau$}
\psfrag{phi}[bc]{$\phi$}
\includegraphics[height=5cm]{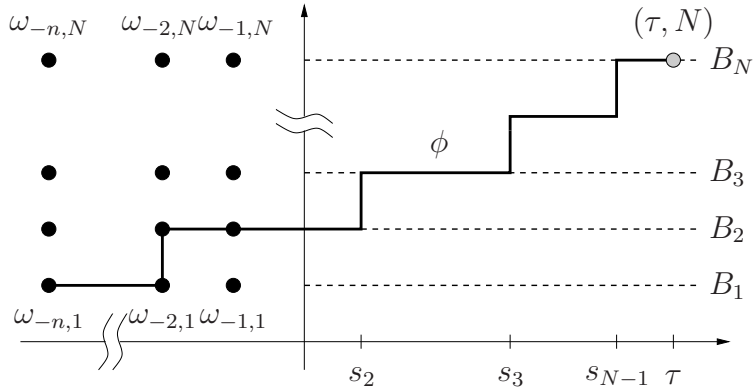}
\caption{The O'Connell--Yor semi-discrete directed polymer with log-gamma boundary sources.
The thick solid line is a possible path $\phi$ from $(-n,1)$ to $(\tau,N)$.
The random variables $\omega_{-k,l}$ have log-gamma distribution with parameter $\alpha_k-a_l$ and the Brownian motions $B_1,\ldots,B_N$ have drifts $a_1,\ldots,a_N$.}
\label{FigSemiDiscreteDP}
\end{center}\end{figure}

Admissible paths consist of discrete and semi-discrete parts.
A discrete up-right path $\phi^d:(i_1,j_1)\nearrow(i_{\ell},j_{\ell})$ is an ordered set of points
$((i_1,j_1),(i_2,j_2),\ldots, (i_{\ell},j_{\ell}))$ with each $(i_k,j_k)\in \Z^2$ and each increment $(i_k,j_k) -(i_{k-1},j_{k-1})$ either $(1,0)$ or $(0,1)$.
A semi-discrete up-right path $\phi^{sd}:(0,l)\nearrow (\tau,N)$ is a union of horizontal line segments
$((0,l)\to (s_l,l))\cup((s_l,l+1)\to (s_{l+1},l+1))\cup\cdots\cup((s_{N-1},N)\to (\tau,N))$ where $0\leq s_l<s_{l+1}<\cdots <s_{N-1}\leq \tau$.
It is convenient to think of $\phi^{sd}$ as a surjective non-decreasing function from $[0,\tau]$ onto $\{l,\dots, N\}$.
Our up-right paths $\phi$ in the mixture model are composed of discrete portions $\phi^d$ adjoined to semi-discrete portions $\phi^{sd}$
in such a way that for some $1\leq l\leq N$, $\phi^d:(-n,1)\nearrow (-1,l)$ and $\phi^{sd}:(0,l)\nearrow (\tau,N)$. 

To an up-right path described above, we associate an energy
\begin{equation}\label{eq:Energy}
E(\phi)=\sum_{(i,j)\in \phi^d} \omega_{i,j} + B_l(s_l)+(B_{l+1}(s_{l+1})-B_{l+1}(s_l))+\dots+(B_N(\tau)-B_N(s_{N-1}))
\end{equation}
which aggregates the randomness along the path, hence itself is random depending on $\omega_{i,j}$ and $B_1,\dots,B_N$.
The polymer measure on a path $\phi$ is proportional to its Boltzmann weight given by $e^{E(\phi)}$.
The normalizing constant or polymer partition function for the O'Connell--Yor semi-discrete directed polymer with log-gamma boundary sources
is the integral of the Boltzmann weight over the background measure on the path space $\phi$, i.e.
\begin{equation}\label{defZOY}
\Zsdaalpha(\tau,N) = \sum_{l=1}^{N} \, \sum_{\phi^d:(-n,1)\nearrow (-1,l)} \, \int_{\phi^{sd}:(0,l)\nearrow(\tau,N)} e^{E(\phi)} \, \d\phi^{sd}
\end{equation}
where $\d\phi^{sd}$ represents the Lebesgue measure on the simplex $0\leq s_k<s_{k+1}<\dots <s_{N-1}\leq \tau$ with which $\phi^{sd}$ is identified.
The free energy of the O'Connell--Yor semi-discrete directed polymer with log-gamma boundary sources is given by
\begin{equation}\label{defF}
\Fsdaalpha(\tau,N) = \ln\left(\Zsdaalpha(\tau,N)\right).
\end{equation}
The distribution of the partition function $\Zsdaalpha(\tau,N)$ of the O'Connell--Yor semi-discrete directed polymer with log-gamma boundary sources was characterized in~\cite{BCFV15} as follows.

\begin{theorem}\cite[Theorem 2.1]{BCFV15}\label{ThmFormulaSemiDiscrete}
Fix $N\geq 9$, $n\geq 0$ and $\tau> 0$.
Let $a=(a_1,\ldots,a_N)\in \R^N$ and $\alpha = (\alpha_1,\ldots,\alpha_n)\in \R_+^n$ be such that $\alpha_k-a_l>0$ for all $1\leq l\leq N$ and $1\leq k\leq n$.
For $1\leq k\leq n$ and $1\leq l\leq N$ let $\omega_{-k,l}$ be independent log-gamma random variables with parameter $\alpha_k-a_l$
and for all $1\leq l\leq N$ let $B_l$ be independent Brownian motions with drift $a_l$.
Then for all $u\in \C$ with positive real part
\begin{equation}\label{semidiscreteFredholm}
\EE\left( e^{-u \Zsdaalpha(\tau,N)} \right) = \fredholmp{\K{u}}{\LC}
\end{equation}
where the operator $\K{u}$ is defined in terms of its integral kernel
\begin{equation}\label{defKu}
\K{u}(v,v')=\frac{1}{2\pi \I}\int_{\Cs{v}}\d s\, \Gamma(-s)\Gamma(1+s)
\frac{ u^s e^{v\tau s+\tau s^2/2}}{v+s-v'} \prod_{l=1}^{N}\frac{\Gamma(v-a_l)}{\Gamma(s+v-a_l)} \prod_{k=1}^n\frac{\Gamma(\alpha_k-v-s)}{\Gamma(\alpha_k-v)}.
\end{equation}
The contours $\Cv{a;\alpha;\varphi}$ and $\Cs{v}$ are given in Definition~\ref{DefCaCsdefBis} below where $\varphi\in(0,\pi/4)$ is arbitrary.
\end{theorem}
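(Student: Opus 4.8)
The plan is to realize $\Zsdaalpha(\tau,N)$ as an observable of an exactly solvable hierarchy, compute all of its positive integer moments, and then resum these into a Fredholm determinant. First I would identify the law of the partition function. The mixture model is produced by a geometric lifting of the Robinson--Schensted--Knuth correspondence, in which the $n$ columns of log-gamma weights $\omega_{-k,l}$ together with the $N$ semi-discrete Brownian bulk directions give rise, after geometric RSK, to a Whittaker-type measure whose parameters are precisely the drifts $a_1,\dots,a_N$ and the boundary spikes $\alpha_1,\dots,\alpha_n$; under this measure $\Zsdaalpha(\tau,N)$ is distributed as $e^{x_N}$ for the top coordinate $x_N$. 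For the algebra it is more convenient to obtain the whole model as a $q\to1$ degeneration of a $q$-Whittaker (Macdonald) process, where the Brownian bulk emerges from the diffusive scaling of $q$-TASEP and the log-gamma boundary from a dual family of Macdonald specialization parameters.

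At the $q$-level I would extract the $q$-moments by applying the Macdonald difference operators, whose eigenrelations yield nested contour-integral formulas with the characteristic two-body factor $\prod_{A<B}(z_A-z_B)/(z_A-z_B-1)$ and single-variable weights carrying the Gamma ratios $\prod_{l}\Gamma(v-a_l)/\Gamma(s+v-a_l)$ and $\prod_{k}\Gamma(\alpha_k-v-s)/\Gamma(\alpha_k-v)$ together with the Gaussian bulk weight $e^{v\tau s+\tau s^2/2}$. Because the classical moment series $\sum_k \tfrac{(-u)^k}{k!}\,\EE[(\Zsdaalpha(\tau,N))^k]$ has zero radius of convergence --- the moments grow super-exponentially through the $e^{\tau s^2/2}$ tail --- the resummation cannot be carried out directly. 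Instead I would work with the $q$-Laplace transform $\EE[1/(\zeta\Zsdaalpha(\tau,N);q)_\infty]$, which does converge, and use its Mellin--Barnes representation to turn the $q$-moment series into a Fredholm determinant on a suitable contour. The factor $\Gamma(-s)\Gamma(1+s)u^s$ appearing in \eqref{defKu} is then the classical image of this $q$-deformed Mellin--Barnes kernel.

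The third and most delicate step, which I expect to be the real obstacle, is the $q\to1$ limit itself: the $q$-Gamma functions converge to ordinary $\Gamma$, the $q$-Pochhammer $1/(\zeta\Zsdaalpha(\tau,N);q)_\infty$ degenerates to the genuine Laplace transform $e^{-u\Zsdaalpha(\tau,N)}$ for $\Re u>0$, and the nested contours collapse onto the curves $\Cv{a;\alpha;\varphi}$ and $\Cs{v}$ of Definition~\ref{DefCaCsdefBis}. Making this rigorous requires uniform tail and decay estimates that let one commute the limit with the Fredholm series expansion, a proof that $\K{u}$ is trace class on $\LC$ so that $\fredholmp{\K{u}}{\LC}$ is well defined, and contour deformations that are legitimate precisely because of the separation hypothesis $\alpha_k-a_l>0$ and the freedom to choose $\varphi\in(0,\pi/4)$ so as to collect the correct residues. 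Once these estimates are secured, identifying the limiting kernel with \eqref{defKu} establishes \eqref{semidiscreteFredholm}.
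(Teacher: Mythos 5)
You should note first that this paper contains no proof of Theorem~\ref{ThmFormulaSemiDiscrete}: it is imported verbatim from \cite[Theorem 2.1]{BCFV15}, so your proposal can only be measured against the proof in that reference. Your architecture --- $q$-Whittaker (Macdonald) processes, $q$-moments via Macdonald difference operators, a $q$-Laplace transform with a Mellin--Barnes representation to evade the ill-posed moment problem, then a $q\to1$ degeneration --- is indeed the lineage behind the $n=0$, drift-perturbed ancestor of this formula \cite{BC11,BCF12}, and your diagnosis that the naive moment series $\sum_k \frac{(-u)^k}{k!}\EE[(\Zsdaalpha(\tau,N))^k]$ has zero radius of convergence is correct. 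However, the theorem with log-gamma boundary sources is not proved this way in \cite{BCFV15}. There the starting point is the geometric RSK correspondence of \cite{COSZ11}, which identifies the law of the mixed polymer partition function with a Whittaker measure carrying both an alpha specialization (the $n$ boundary columns, producing the factors $\Gamma(\alpha_k-v-s)/\Gamma(\alpha_k-v)$) and a Plancherel one (the semi-discrete bulk, producing $e^{v\tau s+\tau s^2/2}$); the Laplace transform is written as an explicit Whittaker integral and converted into the Fredholm determinant \eqref{semidiscreteFredholm} by a determinant-identity-plus-analytic-continuation scheme in the style of Borodin, Corwin and Remenik's treatment of the log-gamma polymer --- precisely in order to \emph{avoid} the $q\to1$ limit for mixed specializations.

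That avoidance is where your sketch has a genuine gap rather than a stylistic difference. The step you defer to ``uniform tail and decay estimates that let one commute the limit with the Fredholm series'' is not a routine verification but the known obstruction: a rigorous $q\to1$ limit of the $e_q$-Laplace Fredholm determinant with mixed alpha and Plancherel specializations had not been carried out, in part because the $s$-contour must separate the fixed poles at $\Zgzero$ coming from $\Gamma(-s)$ from the $v$-dependent poles at $\alpha_k-v+\Zgeqzero$ --- this is exactly why $\Cs{v}$ in Definition~\ref{DefCaCsdefBis} has its $v$-dependent bulge, and constructing and controlling a $q$-analogue of this geometry uniformly in $q$ is unaddressed work, not bookkeeping. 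A telltale symptom is the hypothesis $N\geq 9$, which your outline never produces: in the actual proof it arises because the product $\prod_{l=1}^{N}\Gamma(v-a_l)/\Gamma(s+v-a_l)$ must supply enough polynomial decay in $|v|$ along $\Cv{a;\alpha;\varphi}$ to beat the polynomial growth of the boundary factors (compare the bounds in Lemma~\ref{lemma:Qbound}) and render $\K{u}$ trace class with an absolutely convergent Fredholm expansion. Any completed version of your argument would have to generate this constraint, or a substitute for it; since your outline contains no mechanism that does, the analytic core of the proof is missing rather than merely postponed.
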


\begin{definition}\label{DefCaCsdefBis}
Let $a=(a_1,\ldots,a_N)\in \R^N$ and $\alpha = (\alpha_1,\ldots,\alpha_n)\in \R_+^n$ be such that $\alpha_k-a_l>0$ for all $1\leq l\leq N$ and $1\leq k\leq n$.
Set $\mu=\tfrac{1}{2}\max(a)+\tfrac{1}{2}\min(\alpha)$ and $\eta = \tfrac{1}{4}\max(a)+\tfrac{3}{4}\min(\alpha)$.
Then, for all $\varphi\in (0,\pi/4)$, we define the contour $\Cv{a;\alpha;\varphi}=\{\mu+e^{\I (\pi+\varphi)}y\}_{y\in \Rplus}\cup \{\mu+e^{\I(\pi-\varphi)}y\}_{y\in \Rplus}$.
The contour is oriented so as to have increasing imaginary part.
For every $v\in \Cv{a;\alpha;\varphi}$, we choose $R=-\Re(v)+\eta$, $d>0$, and define a contour $\Cs{v}$ as follows.
$\Cs{v}$ goes by straight lines from $R-\I \infty$, to $R-\I d$, to $1/2-\I d$, to $1/2+\I d$, to $R+\I d$, to $R+\I\infty$.
The parameter $d$ is taken small enough so that $v+\Cs{v}$ does not intersect $\Cv{a;\alpha;\varphi}$.
See Figure~\ref{FigContoursSemiDiscrete} for an illustration.
\end{definition}

\begin{figure}\begin{center}
\psfrag{Cv}[lb]{$\Cv{a;\alpha;\varphi}$}
\psfrag{v+Cs}[lb]{$v+\Cs{v}$}
\psfrag{Cs}[lb]{$\Cs{v}$}
\psfrag{mu}[cb]{$\mu$}
\psfrag{eta}[cb]{$\eta$}
\psfrag{v}[cb]{$v$}
\psfrag{R}[cb]{$R$}
\psfrag{2d}[lb]{$2d$}
\psfrag{alpha}[cb]{$\alpha$'s}
\psfrag{a}[cb]{$a$'s}
\psfrag{0}[cb]{$0$}
\psfrag{phi}[lb]{$\varphi$}
\includegraphics[height=5cm]{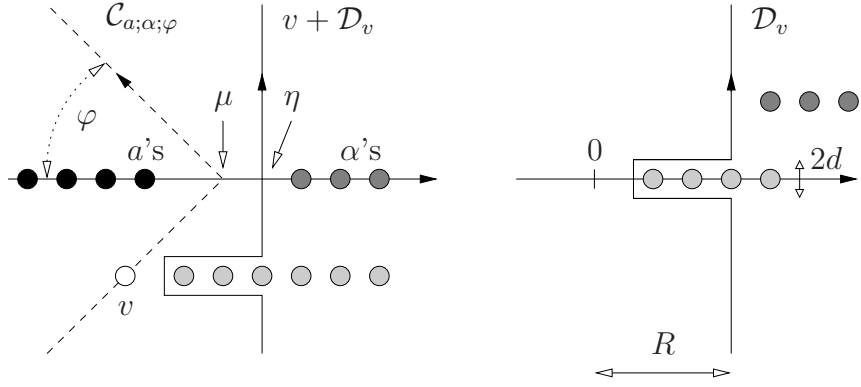}
\caption{Left: the contour $\Cv{a;\alpha;\varphi}$ (dashed) where the black dots symbolize the set of singularities of $K_u(v,v')$ in $v$ at $\cup_{1\leq l\leq N}\{a_l,a_l-1,\dots\}$
coming from the factors $\Gamma(v-a_l)$.
The contour $v+\Cs{v}$ is the solid line.
Right: the contour $\Cs{v}$ where the light grey dots are the singularities at $\{1,2,\dots\}$ coming from $\Gamma(-s)$
and the dark grey dots are those at $\cup_{1\leq k \leq n}\{\alpha_k-v,\alpha_k+1-v,\dots\}$ coming from $\Gamma(\alpha_k-v-s)$.}
\label{FigContoursSemiDiscrete}
\end{center}\end{figure}

Our contribution on the O'Connell--Yor semi-discrete directed polymer with log-gamma boundary sources is that we prove a Borodin--P\'ech\'e scaling limit of its free energy.
To define the limiting distribution, fix two integers $m$ and $n$.
Let $b=(b_1,\ldots,b_m)\in\R^m$ and $\beta=(\beta_1,\ldots,\beta_n)\in\R^n$ be two sets of parameters and assume that
\begin{equation}\label{bbetacond}
\max_{1\le l\le m}b_l<\min_{1\le k\le n}\beta_k
\end{equation}
which is a natural constraint, since otherwise the corresponding polymer models are not well defined, see Theorem~\ref{ThmFreeEnergyScalingLimit} and \ref{ThmTimeLimitGen} below for the parameter scaling.

The Borodin--P\'ech\'e distribution with parameters $b$ and $\beta$ \cite{BorodinPeche08} is defined as
\begin{equation}\label{defBPdistr}
F_{\BP,b,\beta}(r) = \fredholmm{\K{\BP,b,\beta}}{L^2((r,\infty))}
\end{equation}
with the kernel
\begin{equation}\label{defBPkernel}
\K{\BP,b,\beta}(x,y) = \frac1{(2\pi\I)^2}\int_\gamma\d w\int_\Gamma\d z \frac{1}{z-w} \frac{e^{z^3/3-zy}}{e^{w^3/3-wx}}\prod_{l=1}^{m}\frac{z-b_l}{w-b_l}\prod_{k=1}^n\frac{w-\beta_k}{z-\beta_k}
\end{equation}
where the integration contours $\gamma$ and $\Gamma$ are given as follows.
Let $c>0$ be arbitrary.
Then $\gamma$ is $-c+\I\R$ modified in a neighbourhood of the real axis so that it crosses the axis between $\max_{1\le l\le m}b_l$ and $\min_{1\le k\le n}\beta_k$.
The contour $\Gamma$ is $c+\I\R$ modified in a neighbourhood of the real axis so that it crosses the real axis between $\max_{1\le l\le m}b_l$ and $\min_{1\le k\le n}\beta_k$
and it does not intersect $\gamma$.
We mention that for $n=0$, the Borodin--P\'ech\'e distribution reduces to the BBP distribution and for $n=m=0$ to the GUE Tracy--Widom distribution.

To state our main theorem on the scaling limit of the O'Connell--Yor semi-discrete directed polymer with log-gamma boundary sources, we will use the following parametrization.
Let $\Psi(z) = \frac{d}{dz}\ln \Gamma(z)$ be the digamma function.
For a given $\theta\in \Rplus$, define
\begin{equation}\label{parametr1}
\kappa(\theta)=\Psi'(\theta),\quad f(\theta)=\theta\Psi'(\theta)-\Psi(\theta),\quad c(\theta)=(-\Psi''(\theta)/2)^{1/3}.
\end{equation}
We may alternatively parameterize $\theta\in\Rplus$ in terms of $\kappa\in\Rplus$ as
\begin{equation}\label{parametr2}
\theta_\kappa=(\Psi')^{-1}(\kappa)\in\Rplus,\quad f_\kappa=\inf_{t>0} (\kappa t - \Psi(t))=f(\theta_\kappa),\quad c_\kappa=c(\theta_\kappa).
\end{equation}

\begin{theorem}\label{ThmFreeEnergyScalingLimit}
Consider the O'Connell--Yor semi-discrete directed random polymer with log-gamma boundary sources of the following parameters.
Let $a = (a_1, a_2, \dots ,a_m, 0, \dots 0)\in\R^N$ with $m \leq N$ and $\alpha = (\alpha_1,\alpha_2,\dots,\alpha_n)\in\R^n$ where $a_1,a_2,\dots a_m$ may depend on $N$
and $\alpha_k > \max_{1\leq l \leq m}a_l$ for $1\leq k \leq n$.
Let $\kappa>0$ be arbitrary.
Assume furthermore that there are real parameters $b=(b_1,\dots, b_m)$ and $\beta = (\beta_1, \dots \beta_n)$ satisfying \eqref{bbetacond} such that for any $1\le l\le m$ and $1\le k\le n$,
\begin{equation}\label{aalphascaling}
\lim_{N\to\infty} c_{\kappa}N^{1/3}(a_l(N) - \theta_{\kappa}) = b_l \quad\mbox{and}\quad\lim_{N\to\infty} c_{\kappa}N^{1/3}(\alpha_k(N) - \theta_{\kappa}) = \beta_k.
\end{equation}
Then
\begin{equation}
\lim_{N\to\infty} \PP \left( \frac{\Fsdaalpha(\kappa N,N) - N f_{\kappa}}{c_{\kappa}N^{1/3}} \leq r \right) = F_{\BP,b,\beta}(r)
\end{equation}
holds where $\Fsdaalpha$ is the free energy of the O'Connell--Yor semi-discrete directed random polymer given in \eqref{defF}
and $F_{\BP,b,\beta}$ is the Borodin--P\'ech\'e distribution function defined in \eqref{defBPdistr}.
\end{theorem}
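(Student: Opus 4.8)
The plan is to follow the steepest-descent scheme used for the $m$-spiked case in~\cite{BCF12}, feeding the exact Fredholm determinant of Theorem~\ref{ThmFormulaSemiDiscrete} into a saddle-point analysis and carefully tracking the two families of boundary factors attached to $b$ and $\beta$. The first step is to turn the Laplace transform \eqref{semidiscreteFredholm} into a statement about the distribution function. I would take $u=u_N=\exp(-Nf_\kappa-c_\kappa N^{1/3}r)$, which has positive real part, and set $\chi_N=(\Fsdaalpha(\kappa N,N)-Nf_\kappa)/(c_\kappa N^{1/3})$. Since $\Zsdaalpha=e^{\Fsdaalpha}$, one has $u_N\Zsdaalpha=\exp(c_\kappa N^{1/3}(\chi_N-r))$, so $e^{-u_N\Zsdaalpha}=\exp(-\exp(c_\kappa N^{1/3}(\chi_N-r)))$ is a smoothed indicator of $\{\chi_N\le r\}$: it is at least $\exp(-e^{-c_\kappa N^{1/3}\e})$ on $\{\chi_N\le r-\e\}$ and at most $\exp(-e^{c_\kappa N^{1/3}\e})$ on $\{\chi_N\ge r+\e\}$. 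Hence, once we establish $\EE(e^{-u_N\Zsdaalpha})\to F_{\BP,b,\beta}(r)$ for every $r$, these bounds yield $F_{\BP,b,\beta}(r-\e)\le\liminf_N\PP(\chi_N\le r)\le\limsup_N\PP(\chi_N\le r)\le F_{\BP,b,\beta}(r+\e)$ for all $\e>0$, and letting $\e\to0$ using the continuity of $F_{\BP,b,\beta}$ proves the theorem. It thus remains to analyse $\fredholmp{\K{u_N}}{\LC}$.

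Next I would recast the determinant into the double-contour form of \eqref{defBPkernel}. Positioning the contours of Definition~\ref{DefCaCsdefBis} so that $\Re(v+s-v')>0$ and inserting $\frac1{v+s-v'}=\int_0^\infty e^{-\lambda(v+s-v')}\d\lambda$ factorizes $\K{u_N}$ through $L^2(\Rplus)$; the cyclic identity $\det(\Id+AB)=\det(\Id+BA)$ then gives $\fredholmp{\K{u_N}}{\LC}=\det(\Id+\widehat{\mathrm{K}}_{u_N})_{L^2(\Rplus)}$ with
\begin{equation*}
\widehat{\mathrm{K}}_{u_N}(\lambda,\lambda')=\frac1{(2\pi\I)^2}\int_{\Cv{a;\alpha;\varphi}}\d v\int_{\Cs{v}}\d s\,\Gamma(-s)\Gamma(1+s)\,u_N^s\,e^{v\tau s+\tau s^2/2}\,e^{\lambda v-\lambda'(v+s)}\prod_{l=1}^N\frac{\Gamma(v-a_l)}{\Gamma(s+v-a_l)}\prod_{k=1}^n\frac{\Gamma(\alpha_k-v-s)}{\Gamma(\alpha_k-v)}.
\end{equation*}
This now has exactly the structure of \eqref{defBPkernel}, with $v$, $v+s$ and $\lambda,\lambda'$ playing the roles of $w$, $z$ and the spatial variables $x,y$.

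The heart of the argument is the saddle-point analysis. With $\tau=\kappa N$, introduce $p(x)=\tfrac{\kappa}{2}x^2-f_\kappa x-\ln\Gamma(x)$, so that the $N-m$ bulk factors (those with $a_l=0$) together with the terms $v\tau s+\tau s^2/2-Nf_\kappa s$ assemble into $N(p(v+s)-p(v))$, up to a correction $-m\ln(\Gamma(v)/\Gamma(v+s))\to0$. The parametrization \eqref{parametr1}--\eqref{parametr2} is tuned precisely so that $\theta_\kappa$ is a degenerate critical point: $p'(\theta_\kappa)=\kappa\theta_\kappa-f_\kappa-\Psi(\theta_\kappa)=0$, $p''(\theta_\kappa)=\kappa-\Psi'(\theta_\kappa)=0$, and $p'''(\theta_\kappa)=-\Psi''(\theta_\kappa)=2c_\kappa^3\ne0$. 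Under the rescaling (with $a_l,\alpha_k$ scaled as in \eqref{aalphascaling})
\begin{equation*}
v=\theta_\kappa+\frac{w}{c_\kappa N^{1/3}},\qquad v+s=\theta_\kappa+\frac{z}{c_\kappa N^{1/3}},\qquad \lambda=c_\kappa N^{1/3}x,\qquad \lambda'=c_\kappa N^{1/3}y,
\end{equation*}
the cubic expansion $p(\theta_\kappa+\delta)=p(\theta_\kappa)+\tfrac{c_\kappa^3}{3}\delta^3+\Or(\delta^4)$ gives $N(p(v+s)-p(v))\to\tfrac{z^3}{3}-\tfrac{w^3}{3}$; the surviving linear piece $-c_\kappa N^{1/3}rs$ becomes $-(z-w)r$; the factor $e^{\lambda v-\lambda'(v+s)}$ produces $e^{wx-zy}$ up to a conjugation factor $e^{c_\kappa N^{1/3}\theta_\kappa(x-y)}$ that cancels in the determinant; the perturbed and boundary ratios converge, via $\Gamma(\zeta)\sim\zeta^{-1}$ as $\zeta\to0$, to $\prod_l\frac{z-b_l}{w-b_l}$ and $\prod_k\frac{w-\beta_k}{z-\beta_k}$; and $\Gamma(-s)\Gamma(1+s)=-\pi/\sin(\pi s)\sim-c_\kappa N^{1/3}/(z-w)$ supplies both the factor $\frac1{z-w}$ and the sign turning $\Id+$ into $\Id-$. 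Collecting the Jacobians, the rescaled kernel converges to $-\K{\BP,b,\beta}(x+r,y+r)$, the $r$-shift replacing $L^2(\Rplus)$ by $L^2((r,\infty))$, so that the limit is $\fredholmm{\K{\BP,b,\beta}}{L^2((r,\infty))}=F_{\BP,b,\beta}(r)$. The contours of Definition~\ref{DefCaCsdefBis}, centred near $\mu\to\theta_\kappa$, are deformed to cross $\theta_\kappa$ along the steepest-descent directions of $p$ (so that $\Re p$ has a minimum at $\theta_\kappa$ along the $v$-contour and a maximum along the $v+s$-contour, with local openings at angles $\pi\pm\pi/3$ and $\pm\pi/3$); their rescaled images are $\gamma$ and $\Gamma$, and the pole configuration, with the $b_l$ separated from the $\beta_k$ exactly by \eqref{bbetacond}, is what makes the deformation admissible.

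The main obstacle is to upgrade this pointwise kernel convergence to convergence of the Fredholm determinants. This requires uniform estimates: that the contribution of $v,s$ outside a shrinking neighbourhood of $\theta_\kappa$ is exponentially negligible, together with an integrable dominating bound for the rescaled kernel so that the Fredholm expansion may be passed to the limit term by term (via Hadamard's inequality and dominated convergence). The crux is the control of the genuinely $N$-dependent bulk product $(\Gamma(v)/\Gamma(v+s))^{N-m}$, that is, proving that $\Re p(v)$ stays strictly above $\Re p(\theta_\kappa)$ along the whole $v$-contour and that $\Re p(v+s)$ stays strictly below it along the whole $v+s$-contour up to the tails, while keeping the contours clear of the poles of $\Gamma(v-a_l)$, $\Gamma(\alpha_k-v-s)$ and $\Gamma(-s)$. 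These global steepest-descent estimates for the log-gamma exponent are those carried out in~\cite{BCF12} for the $m$-spiked case; the present $(m,n)$-generalization only adds the uniformly bounded factors $\prod_k\Gamma(\alpha_k-v-s)/\Gamma(\alpha_k-v)$, which leave the saddle unchanged and are absorbed into the same estimates under the constraint \eqref{bbetacond}.
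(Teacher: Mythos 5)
Your overall architecture is the paper's: the same scaling $u=\exp(-Nf_\kappa-c_\kappa N^{1/3}r)$, the same saddle function (your $p$ is $-G$ with $G(z)=\ln\Gamma(z)-\kappa z^2/2+f_\kappa z$), the doubly degenerate critical point at $\theta_\kappa$ with $p'''(\theta_\kappa)=2c_\kappa^3$, the $N^{1/3}$ rescaling \eqref{Phichange}, pointwise kernel convergence plus Hadamard/dominated convergence, and your sandwich argument for passing from Laplace transforms to distribution functions is just an inline proof of the paper's Lemma~\ref{lemmaConv}. However, two steps do not hold as written. First, your finite-$N$ reduction to $L^2(\Rplus)$ fails on the given contours: the identity $\frac1{v+s-v'}=\int_0^\infty e^{-\lambda(v+s-v')}\,\d\lambda$ requires $\Re(v+s-v')>0$ for \emph{all} $v,v'\in\Cv{a;\alpha;\varphi}$ and $s\in\Cs{v}$, and this cannot be arranged by "positioning" the contours of Definition~\ref{DefCaCsdefBis}. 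The $v$-contour must remain a wedge with opening $\varphi<\pi/4$ (a vertical line is forbidden, since the real part of $-\kappa v^2/2$ in $G$ grows along vertical lines and the $v$-integral would diverge), so $\Re(v)\to-\infty$ along its tails; meanwhile $\Cs{v}$ is forced to dip to $\Re(s)\in(0,1)$ near the real axis to pass between the poles of $\Gamma(-s)$ at $\Zgeqzero$. Taking $v$ deep in the tail, $s$ on the dipped segment and $v'$ at the apex $\mu$ makes $\Re(v+s-v')$ arbitrarily negative, and your $\lambda$-integral diverges. The paper avoids this by performing the asymptotics on the contour determinant itself --- the rescaled kernel $\K{N}$ of \eqref{eq:rescaledKernel} on $L^2(\Cv{w})$ --- and only reformulating to $L^2((r,\infty))$ \emph{after} the limit (the first equality in \eqref{fredholmconv}, via Lemma~8.7 of \cite{BCF12}), where the limiting contours do have uniform real-part separation. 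Your analysis can be salvaged by the same reordering, but as proposed the opening step is invalid.

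Second, and more importantly for what is actually new in this theorem relative to \cite{BCF12}, your claim that the added factors $\prod_k\Gamma(\alpha_k-v-s)/\Gamma(\alpha_k-v)$ are "uniformly bounded" is false. Their arguments are of size $N^{-1/3}$ near the saddle, so by $\Gamma(\zeta)\sim1/\zeta$ the numerator blows up like $N^{1/3}/|z|$ as $v+s$ approaches $\alpha_k$, and the denominator degenerates as $v$ recedes along the contour; the correct statement is only the polynomial bound $|Q(w,z,\alpha_k)|\le C|w|(1+1/|z|)$ of the paper's Lemma~\ref{lemma:Qbound}, whose proof (via the asymptotics 6.1.37 and 6.1.45 of \cite{AS84}, treating $|w|\lessgtr\delta N^{1/3}$ separately) is precisely the technical content distinguishing this result from the $n=0$ case. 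Only after that bound is it legitimate to say the extra factors are absorbed by the exponential decay of $e^{-NG(\Phi(z))}$ in the steep-descent estimates inherited from \cite{BCF12}. Relatedly, the contour \eqref{defCztilde} you inherit from \cite{BCF12} contains the small circles $B_{v+q}$ (your generic "steepest-descent openings at angles $\pi\pm\pi/3$ and $\pm\pi/3$" description omits them), and the new $Q$-factors evaluated on those residue components require the separate bound \eqref{residuebound}, $|\Gamma(\alpha_k-v-q)/\Gamma(\alpha_k-v)|\le C$, which rests on $|v|$ being uniformly bounded away from $0$ whenever circles are present; your plan leaves this contribution unexamined. So the conclusion you assert is correct, but the one genuinely new estimate of the proof is asserted with a false premise rather than proved.
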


\subsection{Continuum directed random polymer (CDRP) with $(m,n)$-spiked boundary perturbation}\label{SectCDRP}

The partition function $\Zc(T,X)$ of the continuum directed random polymer with boundary perturbation $\Zc_0(X)$
is given by the solution to the stochastic heat equation with multiplicative noise \eqref{she} with initial condition $\Zc_0(X)$.
The initial data $\Zc_0(X)$ may be random but it is assumed to be independent of the space-time white noise.

By the Feynman--Kac representation \eqref{feynman}, $\Zc(T,X)$ is indeed a partition function of a directed polymer model,
since Brownian paths are reweighted in a way that the weight of a path is proportional to the Wick exponential of the randomness integrated along the path.
The normalizing constant which is the partition function $\Zc(T,X)$ is the integral of weights over the space of all possible paths.
Note that $\Zc(T,X)$ itself is random as the randomness of the space-time white noise remains in the formula \eqref{feynman}.

By the work of Mueller~\cite{Mue91}, as long as $\Zc_0(X)$ is almost surely positive, $\Zc(T,X)$ is positive for all $T > 0$ and $X\in\R$ almost surely.
Hence we can take its logarithm and define the free energy for the continuum directed random polymer with boundary perturbation $\ln\Zc_0(X)$ by
$\Fc(T,X)=\ln(\Zc(T,X))$ to be the Hopf--Cole solution of the KPZ equation \eqref{kpz} with initial condition $\Fc_0(X)=\ln\Zc_0(X)$.

Let us now introduce the CDRP with $(m,n)$-spiked boundary perturbation and let us construct the corresponding $(m,n)$-spiked initial condition for the stochastic heat equation.
For fixed integers $m$ and $n$, let $b=(b_1,\ldots,b_m)\in\R^m$ and $\beta=(\beta_1,\ldots,\beta_n)\in\R^n$ be such that \eqref{bbetacond} holds.
Let $B_1,B_2,\dots,B_m$ be independent Brownian motions with drifts $b_1,b_2,\dots,b_m$,
and let $\wt B_1,\wt B_2,\dots,\wt B_n$ be independent Brownian motions with drifts $\beta_1,\beta_2,\dots,\beta_n$.
Furthermore, let $\omega_{-k,l}$ be independent log-gamma random variables with parameter $\beta_k-b_l$ for $1\leq l\leq m$ and $1\leq k\leq n$.
Assume that the two families of Brownian motions and the log-gamma random variables are independent of each other.
For $X\ge0$, let the semi-discrete partition function $\Zsdbbeta(X,m)$ be constructed as in \eqref{defZOY} using the Brownian motions $B_1,B_2,\dots,B_m$ and the log-gamma random variables.

Similarly, we construct another semi-discrete partition function which is coupled to the previous one.
Let the possible paths $\wt\phi$ be composed of a discrete up-right part $\wt\phi^d:(-n,1)\nearrow(k-n-1,m)$ and of a semi-discrete part $\wt\phi^{sd}$.
For $\wt X\ge0$, let the semi-discrete part $\wt\phi^{sd}:(k-n-1,m)\nearrow(-1,\wt X)$ be a union of vertical line segments
$((k-n-1,m)\to(k-n-1,s_k))\cup((k-n,s_k)\to(k-n,s_{k+1}))\cup\dots\cup((-1,s_{n-1})\to(-1,\wt X))$ where $m\le s_k<s_{k+1}<\dots<s_{n-1}\le\wt X$.
The energy of such a path is instead of \eqref{eq:Energy} defined by
\begin{equation}\label{eq:wtEnergy}
E\left(\wt\phi\right)=\sum_{(i,j)\in\wt\phi^d} \omega_{i,j} + \wt B_k(s_k)+(\wt B_{k+1}(s_{k+1})-\wt B_{k+1}(s_k))+\dots+(\wt B_n(\wt X)-\wt B_n(s_{n-1})).
\end{equation}
Then a partition function analogously to \eqref{defZOY} is given by
\begin{equation}\label{defwtZOY}
\wt\Zsd^{b,\beta}(\wt X,n) = \sum_{k=1}^{n} \, \sum_{\wt\phi^d:(-n,1)\nearrow (k-n-1,m)} \, \int_{\wt\phi^{sd}:(k-n-1,m)\nearrow(-1,\wt X)} e^{E\left(\wt\phi\right)} \, \d\wt\phi^{sd}.
\end{equation}
The Brownian motions $\wt B_k$ can be thought of as sitting on the vertical rays starting at $(k-n-1,m)$ for $1\le k\le n$
which makes the definitions \eqref{eq:wtEnergy}--\eqref{defwtZOY} natural.

\begin{figure}\begin{center}
\psfrag{w11}{$\omega_{-1,1}$}
\psfrag{w21}{$\omega_{-2,1}$}
\psfrag{wn-11}{$\,\omega_{-n+1,1}$}
\psfrag{wn1}{$\!\omega_{-n,1}$}
\psfrag{Bm}[lc]{$B_m$}
\psfrag{B1}[lc]{$B_1$}
\psfrag{B2}[lc]{$B_2$}
\psfrag{Bnt}[lc]{$\wt{B}_n$}
\psfrag{Bn-1t}[lc]{$\wt{B}_{n-1}$}
\psfrag{B2t}[lc]{$\wt{B}_2$}
\psfrag{B1t}[lc]{$\wt{B}_1$}
\psfrag{phl}[bc]{$\phi_{\textrm{left}}$}
\psfrag{phr}[bc]{$\phi_{\textrm{right}}$}
\includegraphics[height=6cm]{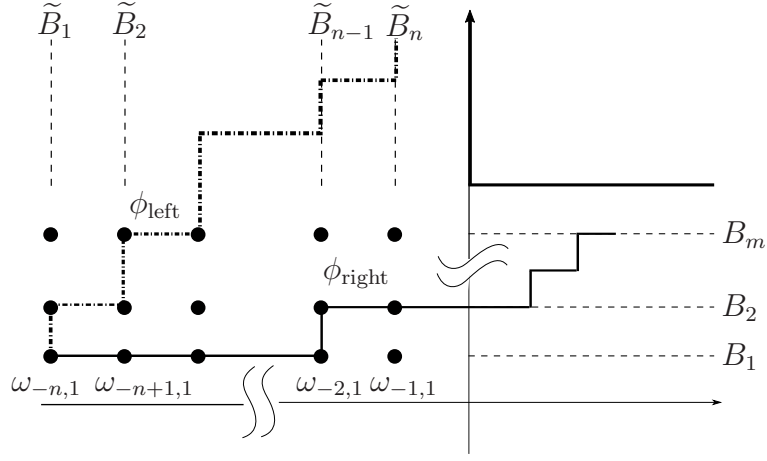}
\caption{The $(m,n)$-spiked boundary perturbation $\Zbbeta_0$ for the CDRP, i.e.\ the $(m,n)$-spiked initial condition for the stochastic heat equation.
It is realized by two semi-discrete polymer partition functions with log-gamma boundary sources where the log-gamma random variables are sampled jointly.
For $X>0$, $\Zsdbbeta(X,m)$ appears on the horizontal half-line starting at $(0,m+1)$ whereas $\wt{\Zsd}^{\beta,b}(-X,n)$ for $X\le0$ appears on the vertical half-line starting at the same point.
\label{fig:mnspikedIC}}
\end{center}\end{figure}

For $b\in\R^m$ and $\beta\in\R^n$, let
\begin{equation}\label{eq:SHEIC}
\Zbbeta_0(X) = \left\{\begin{array}{ll}\Zsdbbeta(X,m), & \mbox{if } X > 0,\\ \wt{\Zsd}^{\beta,b}(-X,n), & \mbox{if } X\leq 0\end{array}\right.
\end{equation}
define the $(m,n)$-spiked boundary perturbation for the CDRP, see Figure~\ref{fig:mnspikedIC}.
Let $\Zbbeta(T,X)$ denote the partition function of the CDRP with $(m,n)$-spiked boundary perturbation
which is the solution of the stochastic heat equation \eqref{she} with initial condition given by \eqref{eq:SHEIC}.
Let $\Fbbeta(T,X) = \ln(\Zbbeta(T,X))$ denote the free energy of the CDRP with $(m,n)$-spiked boundary perturbation.
Note that for $n=0$, the boundary perturbation \eqref{eq:SHEIC} reduces to the $m$-spiked boundary perturbation considered in~\cite{BCF12}.

According to the next theorem, the CDRP with $(m,n)$-spiked boundary perturbations is the limit of the O'Connell--Yor semi-discrete directed polymer with log-gamma boundary sources
under the intermediate disorder scaling.
The theorem in this form was not published yet, it was first announced in~\cite{QMR12} for the O'Connell--Yor semi-discrete directed polymer with boundary perturbations and used e.g.\ in~\cite{BCF12,BCFV15}.
Theorem~\ref{ThmDiscrToContGen} for perturbed boundaries below is a straightforward consequence of the ones used in~\cite{BCF12,BCFV15}.
Intermediate disorder scaling results were however proved more recently for the unperturbed multi-layer semi-discrete directed polymer in~\cite{Nic16} using the same ideas as in~\cite{QMR12}.

\begin{theorem}\label{ThmDiscrToContGen}
Fix $T>0$, $X\in\R$ and real vectors $b=(b_1,\dots,b_m)\in\R^m$ and $\beta=(\beta_1,\dots \beta_n)\in\R^n$ which satisfy \eqref{bbetacond}.
Set $\sigma=(2/T)^{1/3}$ and $\kappa=\sqrt{T/N}+X/N$ which yield by \eqref{parametr2} that $\tau=\kappa N=\sqrt{TN}+X$.
Let the drifts be given by $a=(a_1,\dots,a_m,0,\dots,0)\in\R^N$ where $a_l=\sqrt{N/T}+1/2+b_l$ for $1\le l\le m$ and the boundary parameters by $\alpha_k=\sqrt{N/T}+1/2+\beta_k$ for $1\le k\le n$.
Consider the O'Connell--Yor semi-discrete directed random polymer partition function $\Zsd^{a,\alpha}(\tau,N)$ defined in \eqref{defZOY} with parameters $a$ and $\alpha$.
With the scaling factor
\begin{equation}\label{eq:scalingFactorm}
C(N,m,T,X)=\exp\left(\frac12(N-m)\ln\left(\frac{T}{N}\right)+N+\frac12\left(\sqrt{TN}+X\right)+X\sqrt{\frac{N}{T}}\right),
\end{equation}
one has the convergence in distribution
\begin{equation}
\frac{\Zsdaalpha(\sqrt{TN}+X,N)}{C(N,m,T,X)}\Rightarrow\Zbbeta(T,X)
\end{equation}
as $N$ goes to infinity where $\Zbbeta(T,X)$ is the CDRP with $(m,n)$-spiked boundary perturbation given in \eqref{eq:SHEIC}.
\end{theorem}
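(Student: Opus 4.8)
\emph{The overall strategy} is to reduce Theorem~\ref{ThmDiscrToContGen} to the unperturbed multi-layer intermediate disorder scaling of~\cite{Nic16,QMR12} by writing $\Zsdaalpha(\sqrt{TN}+X,N)$ as a spatial convolution of a boundary partition function against a bulk propagator. On the continuum side I would use that, by linearity of~\eqref{she}, the Hopf--Cole solution with the initial data~\eqref{eq:SHEIC} admits the representation
\begin{equation*}
\Zbbeta(T,X)=\int_\R \Zbbeta_0(Y)\,\Zc^{\delta_Y}(T,X)\dx Y,
\end{equation*}
where $\Zc^{\delta_Y}$ is the propagator of the stochastic heat equation started from a point mass at $Y$ and is independent of the randomness carried by $\Zbbeta_0$. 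The goal is then to match each factor on the discrete side with a factor on this continuum side and to pass to the limit inside the integral.

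\emph{For the decomposition} I would exploit that the energy~\eqref{eq:Energy} is additive along a path and split every admissible path at the space-time location where it first enters the bulk, i.e.\ where it leaves the corner block formed by the $n$ log-gamma columns and the first $m$ rows (those carrying the large drifts $a_l=\sqrt{N/T}+1/2+b_l$) and passes into the zero-drift rows $m+1,\dots,N$ in the continuous region. This factorizes $\Zsdaalpha$ into an integral, over the rescaled entry location, of a boundary partition function built from the corner block times a bulk semi-discrete partition function running to $(\tau,N)$. Under the scaling of the theorem the rescaled entry location becomes the continuum initial position $Y$, which is positive when the path leaves the corner to the right and negative when it first climbs through the columns.

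\emph{To identify the two limits}, the crucial algebraic observation is that in the corner block the log-gamma parameters telescope to $\alpha_k-a_l=\beta_k-b_l$, which are exactly the shared log-gamma weights defining~\eqref{eq:SHEIC}. I would then show that the rescaled boundary partition function converges to $\Zbbeta_0(Y)$: for $Y>0$ the relevant paths extend to the right through rows $1,\dots,m$ using the drifted Brownian motions and converge to $\Zsdbbeta(Y,m)$, recovering the one-sided $m$-spiked limit underlying~\cite{BCF12}; for $Y\le 0$ the relevant paths climb through the log-gamma columns, whose large parameters $\sqrt{N/T}+1/2+\beta_k$ render the centred increments diffusive, so that under the scaling they converge to the vertical Brownian motions $\wt B_1,\dots,\wt B_n$ with drifts $\beta_k$ and hence to $\wt{\Zsd}^{\beta,b}(-Y,n)$. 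For the bulk factor I would invoke the unperturbed scaling of~\cite{Nic16}, conditionally on the corner randomness, to obtain convergence of the rescaled bulk partition function to $\Zc^{\delta_Y}(T,X)$, with the zero-drift Brownian rows supplying the limiting space-time white noise; the normalization~\eqref{eq:scalingFactorm} absorbs the deterministic part coming from the drifted rows and from the means of the large-parameter log-gamma weights. Combining these two convergences and taking the limit through the convolution would produce $\Zbbeta(T,X)$.

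\emph{The main obstacle}, I expect, is the \emph{joint} convergence together with the exchange of limit and integration. One must establish simultaneously that the corner block converges to $\Zbbeta_0$ and the bulk to the point-mass propagator while the two limiting sources of randomness decouple, which is precisely the asymptotic independence of the initial data from the white noise assumed on the continuum side. This calls for uniform tail and tightness bounds on the rescaled partition functions in the entry variable $Y$, so that dominated convergence justifies passing to the limit in the convolution, and for a careful diffusive approximation of the large-parameter log-gamma columns by the Brownian motions $\wt B_k$; the latter is the genuinely new ingredient beyond the one-sided analysis underlying~\cite{BCF12,BCFV15}.
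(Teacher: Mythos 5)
You should first be aware that the paper contains no proof of Theorem~\ref{ThmDiscrToContGen} to compare against: the theorem is presented as known, ``first announced in~\cite{QMR12}'' (a reference listed as in preparation), used in~\cite{BCF12,BCFV15}, and declared a straightforward consequence of the versions used there, with the unperturbed multi-layer case proved in~\cite{Nic16}. Your proposal therefore attempts to reconstruct an argument the paper only cites, and as an outline it reconstructs the right one. The structural points that make the reduction work are correctly identified: the parameters telescope exactly, $\alpha_k-a_l=\beta_k-b_l$, so the corner block carries literally the log-gamma weights of~\eqref{eq:SHEIC}, not merely in the limit; the large component $\sqrt{N/T}+1/2$ of the drift is common to the rows $1,\dots,m$, so the deterministic exponential accumulated in the boundary region depends only on the total time $Y$ spent there and factors out of the boundary partition function, leaving exactly $\Zsdbbeta(Y,m)$; for the columns, the expansion $\Psi\bigl(\sqrt{N/T}+1/2+\beta_k\bigr)=\ln\sqrt{N/T}+\beta_k\sqrt{T/N}+\ordo(1/N)$ shows that, once the per-row constant is cancelled against the $\frac12(N-m)\ln(T/N)$ bookkeeping in~\eqref{eq:scalingFactorm}, the centred column sums over $\ordo\bigl(\sqrt{N/T}\bigr)$ rows become diffusive with drifts $\beta_k$, producing $\wt{\Zsd}^{\beta,b}(-Y,n)$; and the convolution identity for the SHE with initial data independent of the noise is valid. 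One slip of phrasing: only the vertical entry coordinate is rescaled (by $\sqrt{T/N}$); for $Y>0$ the entry time is an unrescaled $\ordo(1)$ quantity, which is precisely why the initial data is a fixed semi-discrete partition function rather than itself a scaling limit.

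That said, the proposal stops where the analytic work begins, as you yourself acknowledge, and the gaps are genuine. First, convergence of the bulk factor to the propagator $\Zc^{\delta_Y}(T,X)$ must hold jointly in the entry location $Y$, i.e.\ at the level of the four-parameter field and jointly with the boundary factor, whereas~\cite{Nic16} is a statement at fixed endpoints; pointwise convergence in distribution for each $Y$ does not allow passing to the limit inside the $Y$-integral, and ``dominated convergence'' does not apply to convergence in law without a process-level statement together with, say, a Skorokhod coupling. Second, the interchange of limit and integration needs uniform-in-$N$ moment bounds: $\Zbbeta_0(Y)$ grows exponentially in $|Y|$ while the propagator has Gaussian heat-kernel decay, so the required domination is plausible but must be proved, e.g.\ via uniform second-moment estimates. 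These estimates are exactly the substance of~\cite{QMR12,Nic16}, whose method is term-by-term $L^2$ convergence of discrete chaos expansions of the (convolved) partition function rather than the pathwise factorization you propose; your route could likely be made rigorous, but in practice it would require those same chaos or moment estimates. In short: your strategy is sound and consistent with the references the paper leans on, but it remains a sketch rather than a proof --- which, to be fair, matches the status of the statement in the paper itself.
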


The main contribution of this work gives the large time limit of the CDRP free energy with $(m,n)$-spiked boundary perturbation.
\begin{theorem}\label{ThmTimeLimitGen}
Let $b = (b_1,\dots,b_m)\in \R^m$ and $\beta=(\beta_1,\dots\beta_n)\in\R^n$ be such that $b_l<\beta_k$ for all $1\leq l \leq m$ and $1\leq k \leq n$.
Let $\sigma = (2/T)^{1/3}$ be scaled with the time parameter and let $Y\in\R$ and $r\in\R$ be arbitrary.
Then for the free energy of the CDRP with $(m,n)$-spiked boundary perturbation of parameters $\sigma b$ and $\sigma\beta$ at rescaled position $X=2^{1/3}YT^{2/3}$,
\begin{equation}\label{eq:timeLimitGen}
\lim_{T\to\infty} \PP\left( \frac{\Fsigbbeta(T,2^{1/3}YT^{2/3})+T/24}{(T/2)^{1/3}} \leq r \right) = F_{\BP,b+Y,\beta+Y} \left(r+Y^2\right)
\end{equation}
holds where $F_{\BP,b+Y,\beta+Y}$ is the Borodin--P\'ech\'e distribution function given by \eqref{defBPdistr} with parameter vectors shifted coordinatewise.
\end{theorem}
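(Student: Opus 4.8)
The plan is to pass from the distribution of the free energy to the Laplace transform of the partition function, to realise that Laplace transform as a limit of the semi-discrete Fredholm determinants via Theorem~\ref{ThmDiscrToContGen} and Theorem~\ref{ThmFormulaSemiDiscrete}, and finally to extract the Borodin--P\'ech\'e distribution by a steepest-descent analysis. Write $Z_T=\Zc^{\sigma b,\sigma\beta}(T,X)$ with $X=2^{1/3}YT^{2/3}$ and $f_T=(\ln Z_T+T/24)/(T/2)^{1/3}$, so that the left-hand side of \eqref{eq:timeLimitGen} is $\PP(f_T\le r)$. Setting $u_T=\exp(T/24-(T/2)^{1/3}r)$ gives $u_TZ_T=\exp\bigl((T/2)^{1/3}(f_T-r)\bigr)$, and since $(T/2)^{1/3}\to\infty$ the map $z\mapsto e^{-u_Tz}$ collapses to the indicator of $\{f_T<r\}$. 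Using the standard sandwich of $e^{-e^{y}}$ between shifted indicators together with the continuity of $F_{\BP,b+Y,\beta+Y}$, it therefore suffices to prove
\[
\lim_{T\to\infty}\EE\left(e^{-u_TZ_T}\right)=F_{\BP,b+Y,\beta+Y}(r+Y^2).
\]

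For each fixed $T$ I would apply Theorem~\ref{ThmDiscrToContGen} with boundary data $\sigma b,\sigma\beta$, i.e.\ with $a_l=\sqrt{N/T}+1/2+\sigma b_l$, $\alpha_k=\sqrt{N/T}+1/2+\sigma\beta_k$, $\tau=\sqrt{TN}+X$ and $\sigma=(2/T)^{1/3}$; condition \eqref{bbetacond} guarantees $\alpha_k-a_l>0$ for large $N$, so Theorem~\ref{ThmFormulaSemiDiscrete} is applicable. The convergence $\Zsdaalpha(\tau,N)/C(N,m,T,X)\Rightarrow Z_T$ of Theorem~\ref{ThmDiscrToContGen}, combined with the fact that $z\mapsto e^{-u_Tz}$ is bounded and continuous, upgrades weak convergence to convergence of exponential moments, whence
\[
\EE\left(e^{-u_TZ_T}\right)=\lim_{N\to\infty}\fredholmp{\K{u_T/C}}{\LC},
\]
with the kernel \eqref{defKu} evaluated at the scaled parameters and $C=C(N,m,T,X)$ as in \eqref{eq:scalingFactorm}. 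This reduces the problem to an iterated large-$N$, large-$T$ analysis of an explicit Fredholm determinant. I would either evaluate the inner $N\to\infty$ limit first, via the intermediate-disorder steepest descent (Stirling applied to the $N-m$ unperturbed factors with $a_l=0$ together with the quadratic term $\tau s^2/2$) to obtain an explicit crossover kernel for the CDRP Laplace transform, or treat the two limits jointly along a diagonal sequence $N(T)\to\infty$ chosen so that the inner limit is attained up to a vanishing error.

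The decisive step is the saddle-point analysis in the large-$T$ regime, parallel to the analysis behind Theorem~\ref{ThmFreeEnergyScalingLimit}. Inserting $\ln(u_T/C)$ into the $s$-integral of \eqref{defKu} and locating the critical point, one rescales $v,v',s$ at the natural $T^{1/3}$ scale around it and deforms $\Cv{a;\alpha;\varphi}$ and $\Cs{v}$ onto steepest-descent contours, keeping track of pole crossings. Near the critical point the exponent linearises to the cubic $z^3/3-w^3/3$ responsible for the Airy/Tracy--Widom structure, while the finitely many perturbed factors carrying $\sigma b_l$ and $\sigma\beta_k$ survive the rescaling as the rational prefactor $\prod_l\frac{z-b_l}{w-b_l}\prod_k\frac{w-\beta_k}{z-\beta_k}$, and the spatial offset $X=2^{1/3}YT^{2/3}$ produces precisely the coordinate shifts $b_l\mapsto b_l+Y$, $\beta_k\mapsto\beta_k+Y$ and the argument shift $r\mapsto r+Y^2$, reproducing $\K{\BP,b+Y,\beta+Y}$ on $L^2((r+Y^2,\infty))$; morally this is the shear (Galilean) covariance of \eqref{she}, under which moving in space tilts the boundary drifts and generates the parabolic $Y^2$ term. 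The main obstacle is analytic rather than algebraic: one must promote pointwise-on-compacts convergence of the rescaled kernel to convergence of the Fredholm determinants, which requires uniform exponential tail bounds on the kernel along the chosen contours so that dominated convergence controls the series expansion of the determinant, and these bounds must hold uniformly in the coupled $(N,T)$ regime. Securing such uniform decay, while simultaneously justifying the interchange of the two limits, is where I expect the bulk of the technical work to lie.
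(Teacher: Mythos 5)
Your opening reductions coincide with the paper's: the sandwich of indicators by $\Theta_T(x)=\exp(-e^{x/\sigma})$ is exactly Lemma~\ref{lemmaConv}, and the fixed-$T$ identity $\EE(e^{-u_T Z_T})=\lim_{N\to\infty}\fredholmp{\K{u}}{\LC}$ via Theorems~\ref{ThmFormulaSemiDiscrete} and~\ref{ThmDiscrToContGen} is the first half of the paper's Proposition~\ref{PropLaplaceFredholm}. But from that point on your plan diverges from the paper, and the divergence is where your proposal has a genuine gap. The paper performs \emph{no} saddle-point analysis in the large-$T$ regime and no coupled $(N,T)$ limit. Instead, it evaluates the inner $N\to\infty$ limit of the Fredholm determinants exactly at fixed $T$ by citing Theorem 6.3 of~\cite{BCFV15}, obtaining the closed crossover formula
\begin{equation*}
\EE\left[\exp\left(-Se^{\frac{X^2}{2T}+\frac{T}{24}}\Zbbeta(T,X)\right)\right]
=\fredholmm{\K{b+X/T,\beta+X/T}^{(\sigma)}}{\LRplus},
\end{equation*}
whose kernel \eqref{defKbbeta} \emph{already contains} the cubic factors $e^{z^3/3-w^3/3}$. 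Consequently the $T\to\infty$ step (Proposition~\ref{propFredholmKonv}) is elementary rather than a steepest descent: with $S=e^{-r/\sigma}$ one only needs $\sigma\pi/\sin(\sigma\pi(z-w))\to1/(z-w)$ and $\Gamma(\sigma u)\sim1/(\sigma u)$ to get pointwise convergence of $\Ksigbbeta(x,y)$ to $\K{\BP,b,\beta}(x+r,y+r)$, while dominated convergence for the determinant series is supplied by the uniform-in-$\sigma$ decay bound of Lemma~B.4 of~\cite{BCFV15} (restated as Lemma~\ref{lemmaKernelEstGen}) together with Hadamard's bound. The shifts by $Y$ and $Y^2$ are then pure bookkeeping: the boundary parameters in the fixed-$T$ formula are $\sigma b+X/T=\sigma(b+Y)$ and $\sigma\beta+X/T=\sigma(\beta+Y)$, and $\sigma X^2/(2T)=Y^2$.

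The concrete gaps in your version are exactly the two items you defer. First, you never derive the crossover kernel for the inner limit — you gesture at "intermediate-disorder steepest descent" — so your large-$T$ analysis has no explicit object to act on; in the paper this input is imported wholesale from~\cite{BCFV15}, and without it (or an equivalent new derivation) the argument does not close. Second, your fallback of a diagonal sequence $N(T)$ with a joint saddle-point analysis on \eqref{defKu} needs uniform tail estimates and a justified interchange of limits which you acknowledge but do not supply; note moreover that the steep-descent estimates behind Theorem~\ref{ThmFreeEnergyScalingLimit} (from~\cite{BCF12}, for $-\Re(G(\Phi(z)))$) are proved for \emph{fixed} $\kappa>0$, whereas in the coupled regime $\kappa=\sqrt{T/N}+X/N\to0$, so those bounds cannot be cited as is and a genuinely new uniformity analysis would be required — this is precisely the difficulty the paper's two-step factorization is designed to avoid. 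A smaller point: you omitted the prefactor $e^{X^2/(2T)}$ from $u_T$, so in your route the parabolic term $Y^2$ and the drift shifts $b\mapsto b+Y$, $\beta\mapsto\beta+Y$ must be extracted from asymptotics (you assert them via Galilean-covariance heuristics), whereas in the paper they are read off identically from the finite-$T$ formula before any limit is taken.
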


\section{Scaling limit for the O'Connell--Yor semi-discrete polymer}
\label{s:OYlimit}

We prove Theorem~\ref{ThmFreeEnergyScalingLimit} in this section which is a modification of the proof of Theorem 1.3 in~\cite{BCF12}.
We mention that in Theorem 1.3 in~\cite{BCF12} which is the $n=0$ case of Theorem~\ref{ThmFreeEnergyScalingLimit}, the factor $c_\kappa$ is missing in the scaling of parameters \eqref{aalphascaling}.
To keep our discussion self-contained, we recall the main steps of the proof and extend it to the present setup.

Let us scale
\begin{equation}\label{uscale}
u=u(N,r,\kappa) = \exp(-Nf_{\kappa} - rc_{\kappa}N^{1/3})
\end{equation}
and set $\tau = \kappa N$.
After the change of variables $\tilde{z} = s+v$ in \eqref{defKu} and by using Euler's reflection formula $1/(\Gamma(-s)\Gamma(1+s)) = -\pi / \sin(\pi s)$,
\begin{multline}\label{Kurewrite}
\K{u}(v,v')
=\frac{-1}{2\pi \I}\int_{\Cv{\tilde z}}\d \tilde{z}\frac{\pi}{\sin(\pi(\tilde{z}-v))}\frac{\exp(NG(v)+rc_{\kappa}N^{1/3}v)}{\exp(NG(\tilde{z})+rc_{\kappa}N^{1/3}\tilde{z})}\frac{1}{\tilde{z}-v'}\\
\times\prod_{l=1}^{m} \frac{\Gamma(v-a_l)\Gamma(\tilde{z})}{\Gamma(\tilde{z}-a_l)\Gamma(v)}
\prod_{k=1}^n\frac{\Gamma(\alpha_k-v-s)}{\Gamma(\alpha_k-v)}
\end{multline}
where $G(z)=\ln\Gamma(z)-\kappa z^2/2+f_{\kappa}z$.
The integration contour $\Cv{\tilde z}$ in \eqref{Kurewrite} was defined in~\cite{BCF12} in the absence of boundary parameters to be
\begin{equation}\label{defCztilde}
\left\{\theta_\kappa+\tilde{\varepsilon} + \I y, y\in\R \right\} \cup \bigcup_{q=1}^r B_{v+q}
\end{equation}
where $B_{v+q}$ denotes a small circle around $v+q$ and clockwise oriented.
$r\in\N_0$ is chosen such that $\Re(v)+r\le\theta_\kappa+\ordo(N^{-1/3})$ and we set $\tilde{\varepsilon} = p(v)c_\kappa^{-1}N^{-1/3}$ with $p(v)\in\{1,3\}$.
This choice of $r$ and $p(v)$ is needed to keep a uniformly positive distance from the poles coming from the sine in the denominator in \eqref{Kurewrite},
see Section 5.1 in~\cite{BCF12} for the precise definition.
It is also argued in~\cite{BCF12} that kernel $K_u$ has enough decay along the contour in \eqref{defCztilde}
which corresponds to the $\varphi=\pi/4$ case for the $\Cv{a;\alpha;\varphi}$ contour in Theorem~\ref{ThmFormulaSemiDiscrete}.

In the present setup when there are boundary parameters $a_l$ and $\alpha_k$ scaled according to \eqref{aalphascaling},
the contour $\Cv{\tilde z}$ is defined to be the contour \eqref{defCztilde} with a local modification in an $N^{-1/3}$ neighbourhood of $\theta_\kappa$
in a way that it crosses the real axis between the $a_l$ and the $\alpha_k$ singularities.
By the Cauchy theorem, the contour $v+\Cs{v}$ for $\tilde z$ seen on the left of Figure~\ref{FigContoursSemiDiscrete} can be replaced by $\Cv{\tilde z}$
without changing the kernel $\K{u}$ in \eqref{Kurewrite}.

The function $G$ has a double critical point at $\theta_\kappa$, i.e.\ $G(v) \simeq G(\theta_\kappa) - \frac{(c_\kappa)^3}{3}(v-\theta_\kappa)^3$.
This suggests the rescaling around $\theta_\kappa$ by $N^{1/3}$, that is the change of variables
\begin{equation}\label{Phichange}
\left\{ v,v',\tilde{z} \right\} = \left\{ \Phi(w), \Phi(w'), \Phi(z) \right\} \quad \textrm{with} \quad \Phi(z) = \theta_\kappa + zc_\kappa^{-1}N^{-1/3}.
\end{equation}
Then the rescaled kernel is defined as
\begin{multline}\label{eq:rescaledKernel}
\K{N}(w,w') = c_\kappa^{-1}N^{-1/3}\K{u}(\Phi(w), \Phi(w'))
= \frac{-c_\kappa^{-1}N^{-1/3}}{2\pi\I} \int_{\Phi^{-1}(\Cv{\tilde z})} \d z \frac{\pi e^{NG(\Phi(w))-NG(\Phi(z))}}{\sin(\pi(z-w)c_\kappa^{-1}N^{-1/3})}\\
\times\frac{e^{r(w-z)}}{z-w'}
\prod_{l=1}^m\frac{\Gamma(\Phi(w)-a_l)\Gamma(\Phi(z))}{\Gamma(\Phi(z)-a_l)\Gamma(\Phi(w))}
\prod_{k=1}^n\frac{\Gamma(\alpha_k-\Phi(z))}{\Gamma(\alpha_k-\Phi(w))}.
\end{multline}

Let the new contour $\Cv{w}$ be the local perturbation of $\left\{-|y|+\I y, y\in\R \right\}$ in a constant neighbourhood of $0$
in a way that it crosses the real axis between the $b_l$ and $\beta_k$ singularities as shown on Figure~\ref{fig:contour_perturb}, also compare with Figure~\ref{FigContoursSemiDiscrete}.
Further, let $\Cv{z}$ be the local modification of $1+\I\R$ in a neighbourhood of $0$ so that it does not intersect $\Cv{w}$ and it crosses the real axis between the two families of singularities.
Then one can replace $\Cv{a;\alpha;\varphi}$ by $\Cv{w}$ and the integration path $\Phi^{-1}(\Cv{\tilde z})$ in \eqref{eq:rescaledKernel} by $\Cv{z}$ so that
one has the equality of Fredholm determinants $\fredholmp{\K{u}}{L^2(\Cv{a;\alpha;\varphi})} = \fredholmp{\K{N}}{L^2(\Cv{w})}$ by the Cauchy theorem.

\begin{figure}\begin{center}
\psfrag{Cw}{$\Cv{w}$}
\psfrag{Cz}{$\Cv{z}$}
\psfrag{0}{$0$}
\psfrag{b}{$b$'s}
\psfrag{beta}{$\beta$'s}
\includegraphics[height=4cm]{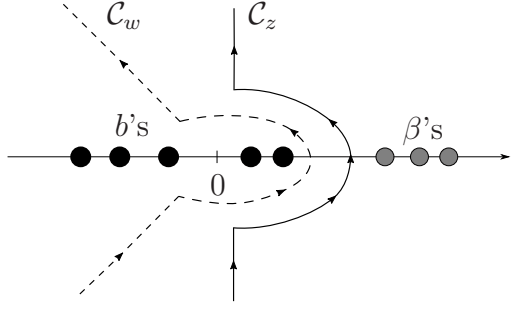}
\caption{Integration paths $\Cv{w}$ and $\Cv{z}$.
The black dots on the left are the values of $b_1,\dots,b_m$ and the grey dots on the right are $\beta_1,\dots,\beta_n$.\label{fig:contour_perturb}}
\end{center}\end{figure}

Based on the next two propositions and by using Lemma~\ref{lemmaConv} below, Theorem~\ref{ThmFreeEnergyScalingLimit} on the scaling limit for the O'Connell--Yor semi-discrete polymer can be verified.

\begin{proposition}\label{propPointwiseLim}
Let $\K{N}(w,w')$ be given in \eqref{eq:rescaledKernel}.
Uniformly for $w,w'$ in a bounded set of $\Cv{w}$,
\begin{equation}\label{kernelpointwise}
\lim_{N\to\infty}\K{N}(w,w')= \wt{\K{}}_{\BP,b,\beta}(w,w')
\end{equation}
where
\begin{equation}\label{limKernelTilde}
\wt{\K{}}_{\BP,b,\beta}(w,w'):=\frac{1}{2\pi\I}\int_{\Cv{z}} \d z \frac{1}{(w-z)(z-w')} \frac{e^{z^3/3-rz}}{e^{w^3/3-rw}}
\prod_{l=1}^{m}\frac{z - b_l}{w - b_l}
\prod_{k=1}^{n}\frac{w - \beta_k}{z -\beta_k}.
\end{equation}
\end{proposition}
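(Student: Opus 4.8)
The plan is to prove the locally uniform convergence \eqref{kernelpointwise} by dominated convergence: I would first identify the pointwise limit of the $z$-integrand in \eqref{eq:rescaledKernel} and then exhibit an $N$-independent integrable majorant along $\Cv{z}$. Since the claim concerns only $w,w'$ in a bounded subset of $\Cv{w}$, the saddle $v=\Phi(w)=\theta_\kappa+wc_\kappa^{-1}N^{-1/3}$ obeys $\Re(v)=\theta_\kappa+\ordo(N^{-1/3})$, so for $N$ large the number of clockwise circles $B_{v+q}$ prescribed in \eqref{defCztilde} is zero and $\Phi^{-1}(\Cv{\tilde z})$ reduces to the image of the single vertical line $\{\theta_\kappa+\tilde\varepsilon+\I y\}$. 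Under $\Phi^{-1}$ this is the vertical line of real part $c_\kappa N^{1/3}\tilde\varepsilon=p(v)\in\{1,3\}$, which the Cauchy theorem lets me deform onto $\Cv{z}$ without crossing poles, exactly as asserted before the proposition.

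For the integrand limit I would treat the factors separately. Using $\sin(x)\sim x$ as $x\to0$,
\[
\frac{-\pi c_\kappa^{-1}N^{-1/3}}{\sin\!\big(\pi(z-w)c_\kappa^{-1}N^{-1/3}\big)}\longrightarrow\frac{1}{w-z}
\]
uniformly for $z,w$ bounded. Taylor expanding $G$ at its double critical point, $G(\theta_\kappa+\zeta c_\kappa^{-1}N^{-1/3})=G(\theta_\kappa)-\tfrac{c_\kappa^{3}}{3}(\zeta c_\kappa^{-1}N^{-1/3})^{3}+\ordo(N^{-4/3})$, gives $N\big(G(\Phi(w))-G(\Phi(z))\big)\to(z^{3}-w^{3})/3$, hence $e^{N(G(\Phi(w))-G(\Phi(z)))}\to e^{z^{3}/3}/e^{w^{3}/3}$, while $e^{r(w-z)}/(z-w')$ is already in its limiting shape. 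Inserting the scalings $a_l=\theta_\kappa+(b_l+o(1))c_\kappa^{-1}N^{-1/3}$ and $\alpha_k=\theta_\kappa+(\beta_k+o(1))c_\kappa^{-1}N^{-1/3}$ from \eqref{aalphascaling}, so that $\Phi(w)-a_l=(w-b_l+o(1))c_\kappa^{-1}N^{-1/3}$ and $\alpha_k-\Phi(z)=(\beta_k-z+o(1))c_\kappa^{-1}N^{-1/3}$, and using $\Gamma(s)\sim s^{-1}$ as $s\to0$ together with $\Gamma(\Phi(w)),\Gamma(\Phi(z))\to\Gamma(\theta_\kappa)$, I obtain
\[
\prod_{l=1}^m\frac{\Gamma(\Phi(w)-a_l)\Gamma(\Phi(z))}{\Gamma(\Phi(z)-a_l)\Gamma(\Phi(w))}\longrightarrow\prod_{l=1}^m\frac{z-b_l}{w-b_l},\qquad
\prod_{k=1}^n\frac{\Gamma(\alpha_k-\Phi(z))}{\Gamma(\alpha_k-\Phi(w))}\longrightarrow\prod_{k=1}^n\frac{w-\beta_k}{z-\beta_k}.
\]
Multiplying these limits reproduces precisely the integrand of \eqref{limKernelTilde}.

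The main obstacle is the integrable majorant, that is, the uniformity along the unbounded tails of $\Cv{z}$. Here I would invoke the steep-descent geometry of $\Re G$ along the $\varphi=\pi/4$ contour already established in~\cite{BCF12}: off a fixed neighbourhood of $\theta_\kappa$ one has a bound $\Re\big(G(\Phi(w))-G(\Phi(z))\big)\le -c_1|z|^{3}+c_2$ with constants $c_1>0,c_2$ uniform for bounded $w$ and large $N$, which near the critical region degenerates to the cubic (Airy) decay visible from $\Re(z^{3})=1-3t^{2}$ on $\Cv{z}$. I would then check that the remaining factors grow sub-cubically: the choice $\tilde\varepsilon=p(v)c_\kappa^{-1}N^{-1/3}$ keeps $\Cv{z}$ a uniformly positive distance from the pole lattice $z\in w+c_\kappa N^{1/3}\Z$, bounding the reciprocal sine, while Stirling's formula controls the $a_l$- and $\alpha_k$-Gamma ratios by at most polynomial and mild exponential factors that the cubic decay absorbs. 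The genuinely new point relative to the $n=0$ analysis of~\cite{BCF12} is the control of the product $\prod_{k}\Gamma(\alpha_k-\Phi(z))/\Gamma(\alpha_k-\Phi(w))$ both near its pole at $z=\beta_k$ and along the tails; since $\Cv{z}$ is arranged to pass between the $b$- and $\beta$-singularities, this factor stays bounded near the critical region and is Stirling-controlled far away, so it does not spoil the majorant. With a dominating function of the form $C\,e^{-c_1|z|^{3}+c_2}$ in hand, dominated convergence yields \eqref{kernelpointwise}, and the uniformity in the bounded $w,w'$ is inherited from the uniformity of all the preceding estimates.
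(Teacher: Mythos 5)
Your proposal follows essentially the same route as the paper's proof: pointwise convergence of each factor (the rescaled sine to the Cauchy factor $1/(w-z)$, the cubic Taylor expansion of $G$ at its double critical point $\theta_\kappa$, and $\Gamma(s)\sim 1/s$ for the boundary Gamma ratios, which is exactly the content of the paper's Lemma~\ref{lemma:Qbound}), combined with dominated convergence based on the steep-descent estimates for $-\Re(G(\Phi(z)))$ imported from~\cite{BCF12}; your observation that the circles $B_{v+q}$ are absent for $w$ in a bounded set is also correct and implicitly used by the paper. One small technical caveat: your single dominating function $C e^{-c_1|z|^3+c_2}$ is not literally uniform in $N$ on the far tails $|\Im(z)|\gg N^{1/3}$, where the exponent $N\Re\bigl(G(\Phi(w))-G(\Phi(z))\bigr)$ decays only like $-c\,N^{1/3}|\Im(z)|^2$, which is weaker than cubic pointwise there; this is harmlessly repaired either by the $N$-independent majorant $C e^{-c\min(|z|^2,|z|^3)}$ or, as the paper does, by splitting the contour at $|\Im(z)|=\delta N^{1/3}$ and estimating the outer contribution directly as $\ordo(e^{-c(\delta)N})$.
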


\begin{proposition}\label{propKernelBound}
For any $w,w'\in\Cv{w}$ there exists a constant $C\in (0,\infty)$ such that 
\begin{equation}\label{kernelbound}
|\K{N}(w,w')| \leq C e^{-|\Im(w)|}
\end{equation}
uniformly for all $N$ large enough.
\end{proposition}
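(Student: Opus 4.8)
The plan is to bound the modulus of the integrand in \eqref{eq:rescaledKernel} directly and to extract from it the advertised factor $e^{-|\Im(w)|}$, leaving a $z$-integral that is bounded uniformly in $N$. Writing $|e^{NG(\Phi(w))-NG(\Phi(z))}| = e^{N\Re(G(\Phi(w))-G(\theta_\kappa))}\cdot e^{N\Re(G(\theta_\kappa)-G(\Phi(z)))}$, the huge constant $NG(\theta_\kappa)$ cancels between the two factors, so it suffices to control $N\Re(G(\Phi(w))-G(\theta_\kappa))$ along $\Cv{w}$ and $N\Re(G(\theta_\kappa)-G(\Phi(z)))$ along $\Cv{z}$ separately. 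The remaining ingredients, namely the two Gamma products, the factor $1/(z-w')$, the exponential $e^{r(w-z)}$, and the scalar prefactor $c_\kappa^{-1}N^{-1/3}\,\pi/\sin(\pi(z-w)c_\kappa^{-1}N^{-1/3})$, will be shown to be dominated by the decay coming from $G$.

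First I would treat the Gamma products. Under the scaling \eqref{aalphascaling} one has $\Phi(w)-a_l=(w-b_l)c_\kappa^{-1}N^{-1/3}(1+o(1))$ and $\alpha_k-\Phi(w)=(\beta_k-w)c_\kappa^{-1}N^{-1/3}(1+o(1))$, and likewise with $w$ replaced by $z$. Using $\Gamma(x)=x^{-1}(1+\Or(x))$ as $x\to 0$ for the small arguments $\Phi(w)-a_l$ and $\alpha_k-\Phi(z)$, together with $\Gamma(\Phi(z))/\Gamma(\Phi(w))\to 1$, the product collapses, in the central region, to $\prod_l (z-b_l)/(w-b_l)\cdot\prod_k (w-\beta_k)/(z-\beta_k)$, matching \eqref{limKernelTilde}; the new factor $\prod_k \Gamma(\alpha_k-\Phi(z))/\Gamma(\alpha_k-\Phi(w))$ is handled in exactly the same way as the $b_l$-product, with the roles of numerator and denominator interchanged. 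Because the contours $\Cv{w}$ and $\Cv{z}$ are chosen to cross the real axis strictly between the $b_l$ and the $\beta_k$ (Figure~\ref{fig:contour_perturb}), these ratios stay bounded away from their poles, and for large $|w|,|z|$ they grow at most polynomially, which is harmless against the cubic decay of $G$.

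Next I would exploit the double critical point. Since $G'(\theta_\kappa)=G''(\theta_\kappa)=0$ and $G'''(\theta_\kappa)=\Psi''(\theta_\kappa)=-2c_\kappa^3$, a Taylor expansion gives $N(G(\Phi(w))-G(\theta_\kappa))\to -w^3/3$ on compact sets, and on $\Cv{w}=\{-|y|+\I y\}$ the leading cubic has strictly negative real part, so the central part of the contour already yields decay far stronger than $e^{-|\Im(w)|}$; symmetrically $N(G(\theta_\kappa)-G(\Phi(z)))\to z^3/3$ produces Gaussian decay along $\Cv{z}$ and hence integrability. The prefactor is controlled by noting that on the central scale $c_\kappa^{-1}N^{-1/3}\,\pi/\sin(\pi(z-w)c_\kappa^{-1}N^{-1/3})\to 1/(z-w)$, while globally the non-intersecting contours keep $|\sin(\pi(z-w)c_\kappa^{-1}N^{-1/3})|$ bounded below away from the diagonal $z=w$, the residual singularity at $z=w$ being the integrable $1/(z-w)$.

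The main obstacle is the uniform tail estimate, i.e.\ the regime where $|\Im(w)|$ or $|\Im(z)|$ grows with $N$ (up to and beyond order $N^{1/3}$), where the cubic approximation is no longer valid and one must rely on global bounds. Here I would establish the steep-descent property of the two contours for the function $\Re G$: that $\Re(G(\Phi(w))-G(\theta_\kappa))$ is decreasing, and $\Re(G(\theta_\kappa)-G(\Phi(z)))$ is negative and integrable, as one moves out along $\Cv{w}$ and $\Cv{z}$ respectively. This rests on Stirling-type asymptotics for $\ln\Gamma$ at large imaginary argument and on the explicit form $G(z)=\ln\Gamma(z)-\kappa z^2/2+f_\kappa z$, and it reproduces the delicate tail analysis of~\cite{BCF12} (their Section~5) for the $n=0$ kernel; the additional product $\prod_k \Gamma(\alpha_k-\Phi(z))/\Gamma(\alpha_k-\Phi(w))$ contributes only factors of the same type already present in the $b_l$-product, so it is absorbed into the same estimates without changing the conclusion. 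Combining the central cubic decay with these tail bounds yields $N\Re(G(\Phi(w))-G(\theta_\kappa))\le -|\Im(w)|+\Or(1)$ on all of $\Cv{w)}$ together with a uniformly convergent $z$-integral, which give \eqref{kernelbound}.
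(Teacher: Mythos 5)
Your overall route---steep descent for $\Re G$ away from the critical point, small-argument Gamma asymptotics near it, and absorbing the new $\alpha_k$-ratios as polynomially growing factors---is in spirit the paper's argument, which reduces everything to the exponential bounds of Proposition 5.2 of~\cite{BCF12} together with the polynomial control $|Q(w,z,\alpha_k)|\le C|w|(1+1/|z|)$ of Lemma~\ref{lemma:Qbound}. But your treatment of the sine prefactor contains a genuine gap. You claim that, away from the diagonal $z=w$, the non-intersecting contours keep $|\sin(\pi(z-w)c_\kappa^{-1}N^{-1/3})|$ bounded below. This is false: the sine vanishes whenever $z-w\in c_\kappa N^{1/3}\Z$, not only at $z=w$. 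Since $\Cv{w}$ follows the wedge $\{-|y|+\I y\}$ out to $|\Re(w)|$ of order $N^{1/3}$ and beyond, the translated poles $z=w+qc_\kappa N^{1/3}$, $q\in\Zgzero$, sweep across any fixed (essentially vertical) $z$-contour as $w$ moves outward, so no lower bound uniform in $N$ and $w$ exists for a straight contour. This is precisely why the contour \eqref{defCztilde} of~\cite{BCF12} is not a line: the shift $\tilde\varepsilon=p(v)c_\kappa^{-1}N^{-1/3}$ with $p(v)\in\{1,3\}$ is chosen to keep a definite distance from the sine poles, and the clockwise circles $B_{v+q}$ collect the residues crossed during the deformation. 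Your proposal never mentions these circle components, hence never bounds the corresponding residue terms; in particular the new Gamma ratios must be controlled at the points $v+q$, which is exactly the content of \eqref{residuebound} in Lemma~\ref{lemma:Qbound} and has no analogue in your argument (your small-argument expansion of $\Gamma$ does not apply there, since $q\ge1$ corresponds to a shift of order $N^{1/3}$ in the rescaled variable $z$). Without this, the uniformity in $N$ of \eqref{kernelbound} is unproven in precisely the regime you yourself identify as the main obstacle.

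Two smaller points. First, your closing estimate $N\Re(G(\Phi(w))-G(\theta_\kappa))\le-|\Im(w)|+\ordo(1)$ is not by itself sufficient, because your own bound on the $\alpha_k$-ratios grows like $|w|$; you need the stronger decay that the steep-descent analysis of~\cite{BCF12} actually provides (cubic near the center, and linear in $|\Im\tilde z|$ on the macroscopic scale, hence with a diverging rate after multiplying by $N$) to absorb the polynomial factor, which is how the paper's one-line proof works. Second, the assertion that the $\Gamma(\alpha_k-\Phi(\cdot))$ product is handled ``exactly the same way as the $b_l$-product with numerator and denominator interchanged'' glosses over an asymmetry: the $a_l$-factors in \eqref{eq:rescaledKernel} come paired with the compensating ratio $\Gamma(\Phi(z))/\Gamma(\Phi(w))$, whereas the $Q$-factors do not, and their arguments become large in the opposite directions along the two contours; this is why the paper proves \eqref{Qbound} separately, via the asymptotics \eqref{eq:AsympGammaIm} for large imaginary argument on $\Cv{z}$ and \eqref{eq:AsympGamma} for large real argument on $\Cv{w}$, rather than by symmetry.
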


\begin{lemma}\label{lemmaConv}\cite[Lemma 4.1.38]{BC11}
Consider a sequence of functions $\left(\Theta_n \right)_{n\geq 1}$  mapping $\R \to \left[ 0,1 \right]$ with the following properties:
$x\mapsto\Theta_n(x)$ is strictly decreasing, $\lim_{x\to -\infty} \Theta_n(x) = 1$, $\lim_{x\to \infty} \Theta_n(x) = 0$ for all $n$
and $\Theta_n(x)\to\Id_{x\leq 0}$ as $n\to\infty$ uniformly on $\R\setminus\left[-\delta,\delta\right]$ for all $\delta>0$.
Consider a sequence of random variables $X_n$ and a continuous probability distribution function $p(r)$ such that $\EE\left[\Theta_n(X_n - r)\right]\to p(r)$ as $n\to\infty$ for each $r\in\R$.
Then $X_n$ converges in distribution to the distribution given by $p(r)$.
\end{lemma}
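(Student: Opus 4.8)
The plan is to reduce the statement to pointwise convergence of distribution functions: writing $F_n(r):=\PP(X_n\le r)$, it suffices to show $F_n(r)\to p(r)$ at every $r\in\R$, because $p$ is continuous and so this is precisely convergence in distribution of $X_n$ to the law with distribution function $p$. The mechanism is that $\Theta_n(X_n-r)$ is a monotone, uniformly bounded surrogate for $\Id_{X_n\le r}$, while the hypothesis supplies $\EE[\Theta_n(X_n-r)]\to p(r)$. First I would fix $r\in\R$ and $\delta>0$ and use the monotonicity of $x\mapsto\Theta_n(x)$ to sandwich the expectation between two shifted tail probabilities.

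For the lower bound I would split off the event $\{X_n\le r-\delta\}$; on it $X_n-r\le-\delta$, so $\Theta_n(X_n-r)\ge\Theta_n(-\delta)$, and together with $\Theta_n\ge0$ this gives
\begin{equation}
\EE\left[\Theta_n(X_n-r)\right]\ge\Theta_n(-\delta)\,\PP(X_n\le r-\delta).
\end{equation}
Letting $n\to\infty$ and using $\Theta_n(-\delta)\to\Id_{-\delta\le0}=1$ yields $\limsup_{n}\PP(X_n\le r-\delta)\le p(r)$. For the upper bound I would split the expectation over $\{X_n\le r+\delta\}$ and its complement, bounding $\Theta_n\le1$ on the first event and $\Theta_n(X_n-r)\le\Theta_n(\delta)$ on the second (where $X_n-r>\delta$):
\begin{equation}
\EE\left[\Theta_n(X_n-r)\right]\le\PP(X_n\le r+\delta)+\Theta_n(\delta).
\end{equation}
Letting $n\to\infty$ and using $\Theta_n(\delta)\to\Id_{\delta\le0}=0$ gives $p(r)\le\liminf_{n}\PP(X_n\le r+\delta)$.

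Finally I would combine the two bounds after relabelling the argument: replacing $r$ by $r+\delta$ in the first and by $r-\delta$ in the second produces
\begin{equation}
p(r-\delta)\le\liminf_{n}F_n(r)\le\limsup_{n}F_n(r)\le p(r+\delta)
\end{equation}
for every $\delta>0$. Sending $\delta\downarrow0$ and invoking the continuity of $p$ forces $\lim_{n}F_n(r)=p(r)$ at every $r$, which is the claim. The argument is elementary and uses neither the strictness of the monotonicity nor the $x\to\pm\infty$ limits of $\Theta_n$. The one point demanding care, and hence the main (mild) obstacle, is that the hypothesis only guarantees uniform convergence on $\R\setminus[-\delta,\delta]$; to conclude $\Theta_n(\pm\delta)\to\Id_{\pm\delta\le0}$ I would apply that uniform convergence on a slightly smaller closed interval such as $[-\delta/2,\delta/2]$, so that the evaluation points $\pm\delta$ lie strictly inside the region where convergence is controlled. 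This bookkeeping at the split points is the only subtlety.
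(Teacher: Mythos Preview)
The paper does not actually prove this lemma: it is quoted verbatim as \cite[Lemma 4.1.38]{BC11} and used as a black box in the proofs of Theorems~\ref{ThmFreeEnergyScalingLimit} and~\ref{ThmTimeLimitGen}. So there is no in-paper argument to compare against.

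Your proof is correct and is precisely the standard sandwich argument one expects for this kind of ``soft cutoff'' lemma. The two inequalities
\[
\Theta_n(-\delta)\,\PP(X_n\le r-\delta)\ \le\ \EE\bigl[\Theta_n(X_n-r)\bigr]\ \le\ \PP(X_n\le r+\delta)+\Theta_n(\delta)
\]
follow exactly as you say from monotonicity and $0\le\Theta_n\le1$, and the relabelling plus continuity of $p$ closes the argument. One cosmetic slip in your final paragraph: the uniform convergence in the hypothesis holds on $\R\setminus[-\delta/2,\delta/2]$, not ``on'' the interval $[-\delta/2,\delta/2]$; your intent (apply the hypothesis with a smaller excluded interval so that the points $\pm\delta$ lie outside it) is clearly right, just the wording is inverted. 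You are also correct that neither the strictness of the monotonicity nor the limits $\Theta_n(\pm\infty)\in\{0,1\}$ are used; plain monotonicity and $0\le\Theta_n\le1$ suffice.
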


\begin{proof}[Proof of Theorem~\ref{ThmFreeEnergyScalingLimit}]
By Hadamard's bound and by dominated convergence, Proposition~\ref{propPointwiseLim} and \ref{propKernelBound} together imply that
\begin{equation}\label{fredholmconv}
\fredholmp{\K{N}}{L^2(\Cv{w})} \to \fredholmm{\wt{\K{}}_{\BP,b,\beta}}{L^2(\Cv{w})}=\fredholmm{\K{\BP,b,\beta}}{L^2((r,\infty))}=F_{\BP,b,\beta}(r)
\end{equation}
as $N\to\infty$ where the first equality above follows from the same reformulation of Fredholm determinants as in Lemma 8.7 of~\cite{BCF12}.

Let us define a sequence of functions $\Theta_N(x) = \exp(-\exp(c_{\kappa}N^{1/3}x))$.
Now by \eqref{uscale},
\begin{equation}
\EE\left[ \Theta_N\left( \frac{F^{a, \alpha} (\kappa N) - N f^{\kappa}}{c_{\kappa}N^{1/3}} - r\right)\right] = \EE\left[ e^{u\Zsdaalpha(\tau,N)} \right] =\fredholmp{\K{u}}{\LC} \to F_{\BP,b,\beta}(r)
\end{equation}
as $N\to\infty$ where we used the definition of $\Theta_N$, Theorem~\ref{ThmFormulaSemiDiscrete} and \eqref{fredholmconv}.
To conclude the proof, one uses Lemma~\ref{lemmaConv} with $p(r) = F_{\BP,b,\beta}(r)$.
\end{proof}

We introduce the extra gamma factors
\begin{equation}\label{defPQ}
P(w,z,a)=\frac{\Gamma(\Phi(w)-a_l)}{\Gamma(\Phi(w))}\frac{\Gamma(\Phi(z))}{\Gamma(\Phi(z)-a_l)},\qquad
Q(w,z,\alpha_k)=\frac{\Gamma(\alpha_k-\Phi(z))}{\Gamma(\alpha_k-\Phi(w))}
\end{equation}
for $1\le l\le m$ and $1\le k\le n$.
To extend the proofs of Proposition 5.1 and 5.2 of~\cite{BCF12} to those of Propostion~\ref{propPointwiseLim} and \ref{propKernelBound}, the following lemma about the bounds on the extra factors is the key.

\begin{lemma}\label{lemma:Qbound}
Once the contours $\Cv{w}$ and $\Cv{z}$ are fixed, there is a constant $C$ such that
\begin{equation}\label{Qbound}
|Q(w,z,\alpha_k)|\le C\frac{|w|}{N^{1/3}}\left(1+\frac{N^{1/3}}{|z|}\right)\le C|w|\left(1+\frac1{|z|}\right)
\end{equation}
as long as $w\in\Cv{w}$ and $z\in\Cv{z}$.

Furthermore, let $N$ be large enough to make the $N^{-1/3}$ difference of $\Cv{\tilde z}$ and the contour in \eqref{defCztilde} small.
Then the small circles in $\Cv{\tilde z}$ and in \eqref{defCztilde} can only be present, i.e.\ $r>0$ can only happen for a $v\in\Cv{a;\alpha;\varphi}$ if $|v|>\varepsilon$ for some fixed $\varepsilon>0$.
In this case there is a $C$ such that for any $v\in\Cv{a;\alpha;\varphi}$ and $q=1,\dots,r$,
\begin{equation}\label{residuebound}
|Q(\Phi^{-1}(v),\Phi^{-1}(v+q),\alpha_k)|\le C.
\end{equation}
\end{lemma}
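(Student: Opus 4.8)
The plan is to reduce both estimates to elementary asymptotics of the Gamma function after inserting the scaling \eqref{aalphascaling}. Write $\delta=c_\kappa^{-1}N^{-1/3}$ and $\alpha_k=\theta_\kappa+\wt\beta_k\delta$ with $\wt\beta_k=c_\kappa N^{1/3}(\alpha_k-\theta_\kappa)\to\beta_k$. Since $\Phi(z)=\theta_\kappa+z\delta$ by \eqref{Phichange}, one gets $\alpha_k-\Phi(z)=(\wt\beta_k-z)\delta$ and hence, from \eqref{defPQ},
\[
Q(w,z,\alpha_k)=\frac{\Gamma\bigl((\wt\beta_k-z)\delta\bigr)}{\Gamma\bigl((\wt\beta_k-w)\delta\bigr)}.
\]
Thus the whole statement becomes a question about a ratio of Gamma values whose arguments are $O(\delta)$ when $w,z$ are bounded but which grow once $|w|,|z|$ reach order $N^{1/3}=c_\kappa^{-1}\delta^{-1}$. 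Throughout I would use that the fixed contours $\Cv{w}$ and $\Cv{z}$ avoid the real points $\beta_k$ and stay a positive distance from the origin, so that $|\wt\beta_k-z|\asymp 1+|z|\asymp|z|$ and $|\wt\beta_k-w|\asymp 1+|w|\asymp|w|$, with constants depending only on the contours.

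For \eqref{Qbound} I would bound the numerator and the reciprocal of the denominator separately, each in two regimes. When $|z|\lesssim N^{1/3}$ the recursion $\Gamma(\zeta)=\zeta^{-1}\Gamma(1+\zeta)$ with $\zeta=(\wt\beta_k-z)\delta$ small gives $|\Gamma((\wt\beta_k-z)\delta)|\le C/(|\wt\beta_k-z|\delta)\le CN^{1/3}/|z|$, which is dominated by $C(1+N^{1/3}/|z|)$; when $|z|\gtrsim N^{1/3}$ one has $z=O(1)+\I t$ on $\Cv{z}$ with $|t|\delta\gtrsim1$, so the classical decay $|\Gamma(x+\I s)|\sim\sqrt{2\pi}|s|^{x-1/2}e^{-\pi|s|/2}$ yields $|\Gamma((\wt\beta_k-z)\delta)|\le C$, again dominated by $C(1+N^{1/3}/|z|)$. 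For the denominator the same recursion gives $|\Gamma((\wt\beta_k-w)\delta)|\ge cN^{1/3}/|w|$ when $|w|\lesssim N^{1/3}$, while for $|w|\gtrsim N^{1/3}$ the shape of $\Cv{w}$ keeps the argument $(\wt\beta_k-w)\delta$ in the sector $|\arg|\le\pi/4$ with modulus tending to infinity, so Stirling forces $|\Gamma((\wt\beta_k-w)\delta)|\to\infty$, in particular $\ge cN^{1/3}/|w|$. Dividing produces the first inequality of \eqref{Qbound}, and the second is immediate from $N^{1/3}\ge1$.

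For \eqref{residuebound} I would first turn the ratio into a finite product. Since $\Phi(\Phi^{-1}(v+q))=v+q$, the recursion $\Gamma(\alpha_k-v)=\prod_{j=1}^{q}(\alpha_k-v-j)\,\Gamma(\alpha_k-v-q)$ gives
\[
Q(\Phi^{-1}(v),\Phi^{-1}(v+q),\alpha_k)=\frac{\Gamma(\alpha_k-v-q)}{\Gamma(\alpha_k-v)}=\prod_{j=1}^{q}\frac{1}{\alpha_k-v-j}.
\]
The key geometric point is that the small circles appear only when $v$ lies far enough out on $\Cv{a;\alpha;\varphi}$ that the shifts $v+1,\dots,v+q$ all sit to the left of $\theta_\kappa$; on the rays $\mu+e^{\I(\pi\pm\varphi)}y$ of Definition~\ref{DefCaCsdefBis} this forces $y$ above a positive constant, hence $|\Im v|\ge c>0$, so every factor obeys $|\alpha_k-v-j|\ge|\Im v|\ge c$. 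I would then split the product: indices with $\Re(\alpha_k-v-j)\ge1$ give factors bounded by $1$, while the constraint $\Re(v)+r\le\theta_\kappa+\ordo(N^{-1/3})$ from \eqref{defCztilde} together with $\alpha_k=\theta_\kappa+\ordo(N^{-1/3})$ confines the remaining indices to an interval of length $\ordo(1)$, leaving only finitely many factors, each $\le 1/c$. The product is therefore $\le(1/c)^{\ordo(1)}=C$, uniformly in $v$ and in $q\le r$.

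The main obstacle I anticipate is not a single estimate but making the bounds genuinely uniform across the transition at $|w|,|z|\sim N^{1/3}$ and along the unbounded contours: gluing the small-argument asymptotics $\Gamma(\zeta)\sim\zeta^{-1}$ to the Stirling regime while tracking the exact powers of $N^{1/3}$, and absorbing the intermediate zone $|w|,|z|\asymp N^{1/3}$ by continuity so that the constants are independent of $N$. The residue bound \eqref{residuebound} is comparatively easy, its only subtlety being the verification that the presence of the circles really does pin $v$ a fixed distance away from the real axis.
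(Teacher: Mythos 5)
Your proposal follows the paper's own proof essentially step by step: the same rewriting $Q(w,z,\alpha_k)=\Gamma\bigl((\wt\beta_k-z)\delta\bigr)/\Gamma\bigl((\wt\beta_k-w)\delta\bigr)$ with $\delta=c_\kappa^{-1}N^{-1/3}$, the same two-regime bounds (split at $|z|,|w|\asymp N^{1/3}$) on the numerator and the denominator using $\Gamma(\zeta)\sim\zeta^{-1}$ near the origin, the decay of $|\Gamma(x+\I s)|$ in $|s|$, and Stirling along the wedge of $\Cv{w}$, and the same geometric pinning that the presence of circles forces $\Re(v)+1\le\theta_\kappa+\ordo(N^{-1/3})$ and hence $|\Im v|\ge c>0$ for \eqref{residuebound}. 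Your explicit counting that only $\ordo(1)$ of the factors $(\alpha_k-v-j)^{-1}$ can have modulus above $1$ (the paper only invokes $|\Im v|\ge c$, which alone would give the $q$-dependent bound $(1/c)^q$) is a welcome sharpening of a detail the paper glosses over, not a different route.
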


\begin{proof}
After substituting \eqref{Phichange} and the scaling \eqref{aalphascaling} into the definition \eqref{defPQ}, one can write
\begin{equation}\label{Qrewrite}
Q(w,z,\alpha)=\frac{\Gamma\left((\beta_k-z+o(1))c_\kappa^{-1}N^{-1/3}\right)}{\Gamma\left((\beta_k-w+o(1))c_\kappa^{-1}N^{-1/3}\right)}.
\end{equation}
First we show that for the numerator
\begin{equation}\label{Qnumer}
\left|\Gamma\left((\beta_k-z+o(1))c_\kappa^{-1}N^{-1/3}\right)\right|\le C\left(1+\frac{N^{1/3}}{|z|}\right)
\end{equation}
holds if $z\in\Cv{z}$.
To this end, we use the asymptotics
\begin{equation}\label{eq:AsympGammaIm}
\lim_{|y|\to\infty} |\Gamma(x+iy)|(2\pi)^{-1/2}e^{\frac12 \pi|y|}|y|^{\frac12 - x} = 1
\end{equation}
from equation 6.1.45 of~\cite{AS84}.
If $z\in\Cv{z}$ and $|z|>\delta N^{1/3}$ for some fixed $\delta>0$, then the real part of the argument of the gamma function in \eqref{Qnumer} goes to $0$ as $N\to\infty$, hence it is bounded.
Consequently, \eqref{eq:AsympGammaIm} yields an exponential decay of $|\Gamma\left((\beta_k-z+o(1))c_\kappa^{-1}N^{-1/3}\right)|$ in $|z|$ which we bound by a constant.
By the asymptotics $\Gamma(Z)\sim1/Z$ around $Z=0$, one gets that the left-hand side of \eqref{Qnumer} can be upper bounded by $CN^{1/3}/|z|$ as long as $|z|<\delta N^{1/3}$.
This proves \eqref{Qnumer}.

Next we prove for the denominator that
\begin{equation}\label{Qdenum}
\left|\Gamma\left((\beta_k-w+o(1))c_\kappa^{-1}N^{-1/3}\right)\right|\ge c\left(1+\frac{N^{1/3}}{|w|}\right)
\end{equation}
for $w\in\Cv{w}$ with a constant $c$ small enough. Equation 6.1.37 in~\cite{AS84} reads as
\begin{equation}\label{eq:AsympGamma}
\Gamma(z) = e^{-z}z^{z-\frac12}(2\pi)^{1/2}\left(1+\ordo\left(\frac{1}{z}\right)\right).
\end{equation}
For $w\in\Cv{w}$ and $|w|>\delta N^{1/3}$, one can write $w=-tN^{1/3}\pm\I tN^{1/3}$ for some $t>\delta/\sqrt2$.
Hence the left-hand side of \eqref{Qdenum} grows as $Ce^{-t}t^t$ as $t\to\infty$ which we can lower bound by a small constant as $t>\delta/\sqrt2$.
If $|w|<\delta N^{1/3}$, by the asymptotics $\Gamma(Z)\sim1/Z$ around $Z=0$ again, the left-hand side of \eqref{Qdenum} is lower bounded by $cN^{1/3}/|w|$.
This shows \eqref{Qdenum}.
Putting \eqref{Qnumer} and \eqref{Qdenum} together yields \eqref{Qbound} with a large enough $C$.

The uniform lower bound on $|v|$ follows from the choice of the contours.
On the one hand, $r$ is chosen such that $\Re(v)+r\le\theta_\kappa+\ordo(N^{-1/3})$.
On the other hand, $v\in\Cv{a;\alpha;\varphi}$ satisfies $v=\theta_\kappa+\ordo(N^{-1/3})+e^{\I(\pi\pm\varphi)}y$ for some $y\in\R_+$.
These two properties imply the lower bound if the small circles are present.

To show \eqref{residuebound} if the circles are present, observe that the ratio which we want to bound in absolute value simplifies as
\begin{equation}
Q(\Phi^{-1}(v),\Phi^{-1}(v+q),\alpha_k)=\frac{\Gamma(\alpha_k-v-q)}{\Gamma(\alpha_k-v)}=\frac{1}{(\alpha_k-v-1)\dots(\alpha_k-v-q)}.
\end{equation}
This is bounded by an absolute constant since $|v|>\varepsilon$ also means that $|\Im(v)|$ is uniformly positive.
\end{proof}

\begin{proof}[Proof of Proposition~\ref{propPointwiseLim}]
Knowing the asymptotics $\Gamma(z) \simeq 1/z$ near zero from \cite{AS84}, one can conclude from \eqref{Qrewrite} that under the scaling \eqref{aalphascaling},
$Q(w,z,\alpha_k)\to(w-\beta_k)/(z-\beta_k)$ holds as $N\to\infty$ for $k=1,\dots,n$.
Similarly, $P(w,z,a_l)\to(z-b_l)/(w-b_l)$ for $l=1,\dots,m$.
As in the proof of Proposition 5.1 in~\cite{BCF12}, the Taylor expansion of the remaining factors in the integrand of $\K{N}$ in \eqref{eq:rescaledKernel} yields that
the integrand converges to the integrand of $\wt{\K{}}_{\BP,b,\beta}$ in \eqref{limKernelTilde}.

One can apply dominated convergence as it is done in the proof of Proposition 5.1 in~\cite{BCF12}.
It was proved in~\cite{BCF12} based on Lemma 5.4 that the integration contour of $\K{N}$ in $z$ is steep descent for the function $-\Re(G(\Phi(z)))$
with derivative going to $-\infty$ linearly in $|\Im(\tilde z)|=N^{-1/3}|\Im(z)|$.
Since the $Q$ factors are bounded in \eqref{Qbound},
the decay of $e^{-NG(\Phi(z))}$ ensures that the integral which defines the kernel $K_N$ in \eqref{eq:rescaledKernel} is still convergent in the presence of the $Q$ factors.
Hence the steps of the proof of Proposition 5.1 in~\cite{BCF12} can be followed.
In particular, the integral which defines $\K{N}$ restricted to the set $|\Im(z)|>\delta N^{1/3}$ is $\ordo(e^{-c(\delta)N})$.
On the other hand on $|\Im(z)|<\delta N^{1/3}$, one can replace the integrand of $\K{N}$ by the integrand of $\wt{\K{}}_{\BP,b,\beta}$ with an overall error of order $\ordo(N^{-1/3})$.
This verifies the convergence of the kernels \eqref{kernelpointwise}.
\end{proof}

\begin{proof}[Proof of Proposition~\ref{propKernelBound}]
The exponential bounds obtained in the proof of Proposition 5.2 in~\cite{BCF12} are not affected by the presence of extra polynomial factors which upper bound $Q$ in \eqref{Qbound}--\eqref{residuebound}.
Hence \eqref{kernelbound} follows.
\end{proof}

\section{Large time limit of the CDRP with $(m,n)$-spiked boundary}
\label{s:CDRPlimit}

In this section, we prove Theorem~\ref{ThmTimeLimitGen} about the large time limit of the free energy $\Zbbeta$ of the CDRP with $(m,n)$-spiked boundary perturbations.
We start by giving a Fredholm determinant formula for its Laplace transform below in Proposition~\ref{PropLaplaceFredholm} based on Theorem~\ref{ThmFormulaSemiDiscrete}.
Let $b=(b_1,\dots,b_m)$ and $\beta=(\beta_1,\dots,\beta_n)$ be such that \eqref{bbetacond} holds.
Define the kernel
\begin{equation}\label{defKbbeta}
\Kbbeta(x,y) = \frac1{(2\pi\I)^2}\int\d w\int\d z \frac{\sigma\pi S^{\sigma(z-w)}}{\sin(\sigma\pi(z-w))} \frac{e^{z^3/3-zy}}{e^{w^3/3-wx}}
\prod_{l=1}^{m} \frac{\Gamma(\sigma w-b_l)}{\Gamma(\sigma z-b_l)}\prod_{k=1}^{n}\frac{\Gamma(\beta_k-\sigma z)}{\Gamma(\beta_k-\sigma w)}
\end{equation}
where
\begin{equation}\label{defsigma}
\sigma = \left(2/T\right)^{1/3}
\end{equation}
and the integration contour for $w$ is from $-\frac1{4\sigma}-\I\infty$ to $-\frac1{4\sigma}+\I\infty$ and crosses the real axis between $\max_{1\le l\le m}b_l/\sigma$ and $\min_{1\le k\le n}\beta_k/\sigma$.
The other contour for $z$ goes from $\frac1{4\sigma}-\I\infty$ to $\frac1{4\sigma}+\I\infty$,
it also crosses the real axis between $\max_{1\le l\le m}b_l/\sigma$ and $\min_{1\le k\le n}\beta_k/\sigma$ and it does not intersect the contour for $w$.

\begin{proposition}\label{PropLaplaceFredholm}
Fix $S$ with positive real part, $T>0$, $b$ and $\beta$ real vectors with \eqref{bbetacond}.
Set $\sigma$ as in \eqref{defsigma}.
Then
\begin{equation}\label{LaplaceFredholm}
\EE\left[ \exp\left(-Se^{\frac{X^2}{2T}+\frac T{24}}\Zbbeta(T,X)\right)\right] = \fredholmm{\K{b+X/T,\beta+X/T}^{(\sigma)}}{\LRplus}
\end{equation}
where $\Zbbeta$ is the partition function of the CDRP with $(m,n)$-spiked boundary perturbations and $\Kbbeta$ is defined in \eqref{defKbbeta}.
\end{proposition}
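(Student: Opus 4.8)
The plan is to derive \eqref{LaplaceFredholm} as the $N\to\infty$ limit of the exact formula of Theorem~\ref{ThmFormulaSemiDiscrete} under the intermediate-disorder scaling of Theorem~\ref{ThmDiscrToContGen}. First I would apply Theorem~\ref{ThmFormulaSemiDiscrete} with the parameters of Theorem~\ref{ThmDiscrToContGen}, so that $\tau=\sqrt{TN}+X$, $a_l=\sqrt{N/T}+1/2+b_l$ and $\alpha_k=\sqrt{N/T}+1/2+\beta_k$; for $N$ large the hypotheses hold, since $\alpha_k-a_l$ is either $\beta_k-b_l>0$ or $\sqrt{N/T}+1/2+\beta_k>0$ by \eqref{bbetacond}. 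Choosing the Laplace variable $u=u(N)=c/C(N,m,T,X)$ with $c:=Se^{X^2/(2T)+T/24}$ fixed and $C$ as in \eqref{eq:scalingFactorm}, one has $u(N)\,\Zsdaalpha(\tau,N)=c\,Y_N$ where $Y_N:=\Zsdaalpha(\tau,N)/C(N,m,T,X)\Rightarrow\Zbbeta(T,X)$ by Theorem~\ref{ThmDiscrToContGen}. Since $\Re c>0$ and the partition functions are nonnegative, $y\mapsto e^{-cy}$ is bounded and continuous on $[0,\infty)$, so weak convergence identifies the left-hand side of \eqref{LaplaceFredholm} with $\lim_{N\to\infty}\EE(e^{-u\Zsdaalpha(\tau,N)})=\lim_{N\to\infty}\fredholmp{\K{u}}{\LC}$, the last equality by Theorem~\ref{ThmFormulaSemiDiscrete}. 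It remains to evaluate this limit of Fredholm determinants.

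For the determinant I would carry out the saddle-point analysis of the $m$-spiked case of~\cite{BCF12}, but now in the regime $\kappa=\sqrt{T/N}+X/N\to0$. The function $G(z)=\ln\Gamma(z)-\kappa z^2/2+f_\kappa z$ from \eqref{Kurewrite} still has a double critical point at $\theta_\kappa$, but here $\theta_\kappa\to\infty$; expanding $(\Psi')^{-1}$ and $\Psi''$ at infinity gives $\theta_\kappa=\sqrt{N/T}-X/T+1/2+o(1)$ and $c_\kappa^{-1}N^{-1/3}\to\sigma=(2/T)^{1/3}$, whence $a_l-\theta_\kappa\to b_l+X/T$ and $\alpha_k-\theta_\kappa\to\beta_k+X/T$. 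The change of variables \eqref{Phichange} together with the reformulation of the determinant as an operator on $\LRplus$ used in the $m$-spiked case then reproduces the structure of $\Kbbeta$ term by term: the double critical point produces the cubic $e^{z^3/3-w^3/3}$, Euler's reflection formula turns $\Gamma(-s)\Gamma(1+s)$ into the factor $\sigma\pi/\sin(\sigma\pi(z-w))$, the precise forms of $u(N)$ and $C$ collapse the remaining exponentials to $S^{\sigma(z-w)}$ (absorbing the $N$-dependent powers of $\theta_\kappa$ carried by the drift normalization), and the $a_l$-factors $\prod_l\Gamma(v-a_l)/\Gamma(s+v-a_l)$ converge to $\prod_l\Gamma(\sigma w-(b_l+X/T))/\Gamma(\sigma z-(b_l+X/T))$. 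The shift $X/T$ is exactly the offset $a_l-\theta_\kappa$ recorded above.

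The genuinely new ingredient is the extra product $\prod_{k=1}^n\Gamma(\alpha_k-v-s)/\Gamma(\alpha_k-v)$ in \eqref{defKu}. Writing $\tilde z=s+v$ and using $v-\theta_\kappa\to\sigma w$, $\tilde z-\theta_\kappa\to\sigma z$ and $\alpha_k-\theta_\kappa\to\beta_k+X/T$, all arguments remain of order one and this factor converges pointwise to $\prod_k\Gamma((\beta_k+X/T)-\sigma z)/\Gamma((\beta_k+X/T)-\sigma w)$, supplying the last product of $\Kbbeta$ with the shifted parameters. For the determinant limit I also need a uniform integrable bound on these factors along the non-compact, $N$-dependent contours, an intermediate-disorder analogue of Lemma~\ref{lemma:Qbound}, obtained by splitting according to whether the imaginary part of the rescaled variable is large or small and using $|\Gamma(x+\I y)|\sim(2\pi)^{1/2}e^{-\pi|y|/2}|y|^{x-1/2}$ together with $\Gamma(z)\sim1/z$ near $z=0$. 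Granting these bounds, the steep-descent decay of $e^{-NG}$ away from $\theta_\kappa$ (unaffected by the polynomially controlled extra factors) and Hadamard's inequality justify dominated convergence, yielding $\fredholmp{\K{u}}{\LC}\to\fredholmm{\K{b+X/T,\beta+X/T}^{(\sigma)}}{\LRplus}$ and hence \eqref{LaplaceFredholm}.

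The main obstacle is precisely this uniform control in the non-standard regime $\kappa\to0$. In contrast to Section~\ref{s:OYlimit}, where $\theta_\kappa$ stays bounded, here both the critical point and the boundary parameters diverge like $\sqrt{N/T}$, so every digamma and Gamma expansion must be performed with arguments tending to infinity and kept uniform in $N$ along contours that themselves move with $N$; exhibiting a single $N$-independent integrable dominating function that simultaneously absorbs the new $\prod_k$ factors and the $N$-dependent $\theta_\kappa$-powers is the technical heart of the argument.
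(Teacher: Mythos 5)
Your probabilistic half coincides with the paper's proof: the paper likewise applies Theorem~\ref{ThmFormulaSemiDiscrete} with $u=Se^{\frac{X^2}{2T}+\frac{T}{24}}/C(N,m,T,X)$ (its \eqref{defu}), uses Theorem~\ref{ThmDiscrToContGen} plus positivity of the partition function to upgrade $u\Zsdaalpha(\sqrt{TN}+X,N)\Rightarrow Se^{\frac{X^2}{2T}+\frac{T}{24}}\Zbbeta(T,X)$ to convergence of Laplace transforms, exactly as you argue via boundedness and continuity of $y\mapsto e^{-cy}$ on $[0,\infty)$.

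The divergence is in the analytic half. Where you propose to redo the entire steepest-descent analysis in the degenerating regime $\kappa=\sqrt{T/N}+X/N\to0$, the paper does no asymptotics at all: it cites Theorem~6.3 of~\cite{BCFV15}, which already establishes $\fredholmp{\K{u}}{L^2(\Cv{a;\alpha;\pi/4})}\to\fredholmm{\K{b+X/T,\beta+X/T}^{(\sigma)}}{\LRplus}$ for this very kernel. In particular, the factor you single out as the ``genuinely new ingredient,'' $\prod_{k}\Gamma(\alpha_k-v-s)/\Gamma(\alpha_k-v)$, is not new relative to~\cite{BCFV15}: that paper treats the same polymer with log-gamma boundary sources, so its intermediate-disorder limit of the Fredholm determinant already includes both families of gamma factors; only the $(m,n)$-spiked initial data and the subsequent $T\to\infty$ limit are new in the present paper. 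The one piece of bookkeeping the paper does perform is the reconciliation of scalings: \cite[Section~6]{BCFV15} uses $a_l=\theta_\kappa+b_l$, $\alpha_k=\theta_\kappa+\beta_k$, while Theorem~\ref{ThmDiscrToContGen} uses $a_l=\sqrt{N/T}+1/2+b_l$, and since $\theta_\kappa=\sqrt{N/T}-X/T+1/2+\ordo(N^{-1/2})$ this mismatch is precisely the shift $X/T$ in the limiting parameters --- the same arithmetic as your observation $a_l-\theta_\kappa\to b_l+X/T$. Your route buys self-containedness (and your scaling identifications $c_\kappa^{-1}N^{-1/3}\to\sigma$, the emergence of $\sigma\pi S^{\sigma(z-w)}/\sin(\sigma\pi(z-w))$, and the pointwise limits of all gamma ratios are correct and consistent with what the cited proof actually does), but the cost, which you honestly flag, is that the uniform dominating bounds along $N$-dependent contours with $\theta_\kappa\to\infty$ are asserted rather than supplied; that estimate is exactly the technical content of \cite[Theorem~6.3]{BCFV15}, so your plan is sound and correctly aimed, yet as written it is incomplete at precisely the point the paper outsources to the literature.
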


\begin{proof}
Let Theorem~\ref{ThmFormulaSemiDiscrete} be used with
\begin{equation}\label{defu}
u=\frac S{C(N,m,T,X)}e^{\frac{X^2}{2T} + \frac{T}{24}}
\end{equation}
where $C(N,m,T,X)$ is given by \eqref{eq:scalingFactorm}.
Then on the left-hand side of \eqref{semidiscreteFredholm} with $\tau=\sqrt{TN}+X$, Theorem~\ref{ThmDiscrToContGen} on the intermediate disorder scaling yields the convergence in distribution
\begin{equation}\label{uZconvdistr}
u\Zsdaalpha(\sqrt{TN}+X,N)\Rightarrow Se^{\frac{X^2}{2T}+\frac T{24}}\Zbbeta(T,X)
\end{equation}
as $N\to\infty$.
By definition \eqref{defZOY}, the partition function $\Zsdaalpha$ is positive, hence \eqref{uZconvdistr} implies the convergence of the Laplace transforms
\begin{equation}
\EE\left[  e^{-u\Zsdaalpha(\tau,N)} \right] \to \EE\left[ \exp\left( -Se^{\frac{X^2}{2T}+\frac T{24}} \Zbbeta(T,X) \right) \right]
\end{equation}
as $N\to\infty$ where $\tau=\sqrt{TN}+X$ and $u$ is defined in \eqref{defu}.

On the other hand, the same scaling of parameters is used on the right-hand side of \eqref{semidiscreteFredholm}.
Then Theorem 6.3 of~\cite{BCFV15} is used to conclude the convergence of Fredholm determinants
\begin{equation}
\lim_{N\to\infty} \det\left(\Id+ \K{u}\right)_{L^2(\Cv{a;\alpha;\pi/4})}=\det\left(\Id-\K{b+X/T,\beta+X/T}^{(\sigma)}\right)_{\LRplus}
\end{equation}
under the following scaling of the parameters.
As in Theorem~\ref{ThmDiscrToContGen}, one sets $\tau=\sqrt{TN}+X$, $\kappa=\tau/N$ and $\theta_\kappa$ is given by \eqref{parametr2}.
This means that $\theta_\kappa=\sqrt{N/T}-X/T+1/2+\ordo(N^{-1/2})$.
One sets $u$ given by \eqref{defu} and $\sigma$ given by \eqref{defsigma}.
For the boundary parameters $a_l$ and $\alpha_k$, instead of the scaling given in Theorem~\ref{ThmDiscrToContGen},
one sets $a_l=\theta_\kappa+b_l$ and $\alpha_k=\theta_\kappa+\beta_k$ according to Section 6 of~\cite{BCFV15}.
This difference results in the shift by $X/T$ in the rescaled boundary parameters $b_l$ and $\beta_k$ which completes the proof.
\end{proof}

The following proposition is the key for the proof of Theorem~\ref{ThmTimeLimitGen}.

\begin{proposition}\label{propFredholmKonv}
We have
\begin{equation}\label{sigmafredholmconv}
\fredholmm{\Ksigbbeta}{\LRplus} \to \fredholmm{\K{BP,b,\beta}}{\Lrinf}
\end{equation}
as $\sigma\to0$	where $\Kbbeta$ and $\K{BP,b,\beta}$ are given in \eqref{defKbbeta} and \eqref{defBPkernel}.
\end{proposition}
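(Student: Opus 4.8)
The plan is to follow the scheme used for the BBP limit in the $n=0$ case in~\cite{BCF12}: establish a pointwise limit of the kernels together with a bound that is uniform in small $\sigma$ and integrable along the contours, and then pass to the limit in the Fredholm series by dominated convergence, Hadamard's inequality furnishing the summable majorant. The first thing to pin down is the scaling of $S$, which is what produces the shift of the domain from $\LRplus$ to $\Lrinf$. With the relevant choice $S=e^{-r/\sigma}$ (real and positive, hence admissible in Proposition~\ref{PropLaplaceFredholm}) one has $S^{\sigma(z-w)}=e^{-r(z-w)}$ in~\eqref{defKbbeta}, so that
\begin{equation*}
\frac{e^{z^3/3-zy}}{e^{w^3/3-wx}}\,S^{\sigma(z-w)}=\frac{e^{z^3/3-z(y+r)}}{e^{w^3/3-w(x+r)}}.
\end{equation*}
Hence the natural candidate limit is $\K{\BP,b,\beta}(x+r,y+r)$, and since $\fredholmm{\K{\BP,b,\beta}(\cdot+r,\cdot+r)}{\LRplus}=\fredholmm{\K{\BP,b,\beta}}{\Lrinf}$ by the substitution $x\mapsto x-r$, it is enough to prove that $\Ksigbbeta(x,y)\to\K{\BP,b,\beta}(x+r,y+r)$ with a suitable uniformity.

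Before sending $\sigma\to0$ I would replace the $\sigma$-dependent contours of~\eqref{defKbbeta}, whose straight parts lie at real parts $\mp\tfrac1{4\sigma}$, by the fixed contours $\gamma$ and $\Gamma$ of~\eqref{defBPkernel} crossing the real axis between $\max_{1\le l\le m}b_l$ and $\min_{1\le k\le n}\beta_k$. This deformation is admissible for all small $\sigma$: the poles of $\sigma\pi/\sin(\sigma\pi(z-w))$ occur at $z-w\in\sigma^{-1}\Z\setminus\{0\}$ and recede to infinity, so throughout the swept region one stays in the band $0<|z-w|<\sigma^{-1}$ and crosses no sine pole, while the poles of the gamma factors lie on the real axis on the far side of the crossing point and, since the contour always crosses within the $b$--$\beta$ gap, are likewise not crossed. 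By Cauchy's theorem the determinant is unchanged, and on the fixed contours the pointwise limit is now immediate factor by factor: $\sin(\sigma\pi(z-w))\sim\sigma\pi(z-w)$ gives $\sigma\pi e^{-r(z-w)}/\sin(\sigma\pi(z-w))\to e^{-r(z-w)}/(z-w)$, and $\Gamma(Z)\sim1/Z$ near the origin gives $\Gamma(\sigma(w-b_l))/\Gamma(\sigma(z-b_l))\to(z-b_l)/(w-b_l)$ and $\Gamma(\sigma(\beta_k-z))/\Gamma(\sigma(\beta_k-w))\to(w-\beta_k)/(z-\beta_k)$. The product reproduces exactly the integrand of $\K{\BP,b,\beta}(x+r,y+r)$, uniformly for $x,y$ in a compact set and locally uniformly along the contours.

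For the dominating bound the decay is carried by the cubic term: on the vertical parts $z=c+\I t$, $w=-c+\I t'$ one has $|e^{z^3/3-w^3/3}|=\ordo(e^{-c(t^2+(t')^2)})$, and the factors $e^{-zy}$, $e^{wx}$ contribute the decay in $x,y\ge0$ needed for the Fredholm expansion. The gamma ratios are bounded by polynomial factors uniformly in small $\sigma$ by the very argument of Lemma~\ref{lemma:Qbound} applied to $\Gamma(\sigma(\beta_k-z))/\Gamma(\sigma(\beta_k-w))$ and to its $b$-analogue, combining $\Gamma(Z)\sim1/Z$ near $0$ with the large-imaginary-part asymptotics~\eqref{eq:AsympGammaIm}; these polynomial growths are harmless against the Gaussian cubic decay. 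The genuinely $\sigma$-dependent object is the sine factor, and bounding it uniformly is the step I expect to be the \emph{main obstacle}: writing $|\sin(\sigma\pi(z-w))|^2=\sin^2(2\pi\sigma c)+\sinh^2(\pi\sigma\,\Im(z-w))$ on the straight parts shows that $|\sigma\pi/\sin(\sigma\pi(z-w))|$ is $\ordo(1/|z-w|)$ while $\sigma|\Im(z-w)|$ stays bounded and decays once it is large, hence is bounded uniformly in $\sigma$. Combining this with the cubic Gaussian decay and the polynomial gamma bounds yields a $\sigma$-independent, integrable majorant; dominated convergence applied termwise to the Fredholm series, together with Hadamard's bound, then gives~\eqref{sigmafredholmconv}.
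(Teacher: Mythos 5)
Your proposal is correct, and its skeleton coincides with the paper's proof: the same choice $S=e^{-r/\sigma}$ absorbing the shift so that the candidate limit is $\K{\BP,b,\beta}(x+r,y+r)$ on $\LRplus$, the same factor-by-factor asymptotics ($\sin(\sigma\pi(z-w))\sim\sigma\pi(z-w)$ and $\Gamma(Z)\sim 1/Z$) for the pointwise convergence of the integrand, and the same dominated-convergence-plus-Hadamard passage to the limit of the Fredholm series. Where you genuinely diverge is in the source of the $\sigma$-uniform majorant: the paper does not estimate the sine and gamma factors by hand but imports the ready-made decay bound $|\Ksigbbeta(x,y)|\le C\exp(b_m x-\beta_1 y)$ from Lemma~\ref{lemmaKernelEstGen} (Lemma~B.4 of~\cite{BCFV15}, applicable to the parameters $\sigma b,\sigma\beta$ once $\sigma$ is small enough that $\sigma(\beta_i-b_j)<1$), which immediately yields the summable bound $C^{2n}n^{n/2}(\beta_1-b_m)^{-n}/n!$ on the $n$th term. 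Your route re-derives an equivalent bound self-containedly, and in doing so you make explicit a step the paper glosses over: the deformation of the $\sigma$-dependent contours at $\mp\tfrac{1}{4\sigma}$ to fixed ones, which is genuinely needed since on the original vertical parts $\Re(z^3/3)$ contains a term of order $\sigma^{-3}$ and no $\sigma$-uniform domination could hold there. Two small repairs to your write-up: the justification that no sine pole is crossed should be phrased via $|\Re(z-w)|\le\tfrac{1}{2\sigma}+\ordo(1)<\sigma^{-1}$ together with the fact that zeros of $\sin(\sigma\pi(z-w))$ require $z-w\in\sigma^{-1}\Z$ to be \emph{real} (your stated band $0<|z-w|<\sigma^{-1}$ is literally false, as $|\Im(z-w)|$ is unbounded, though the conclusion stands); and in the domination step you should note that since $\max_l b_l$ may be positive, $e^{wx}$ alone need not decay in $x$ --- you must fix the crossing point $d_z$ of the $z$-contour strictly to the right of the crossing point $d_w$ of the $w$-contour inside the $b$--$\beta$ gap, so that the factorized bound $|\Ksigbbeta(x,y)|\le Ce^{d_w x-d_z y}$ nets the decay $e^{-(d_z-d_w)\sum_j x_j}$ inside the Hadamard estimate, exactly as the $b_m x-\beta_1 y$ exponents do in the paper's argument.
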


\begin{proof}[Proof of Theorem~\ref{ThmTimeLimitGen}]
Let $S = e^{-r/\sigma}$ and define the functions $\Theta_T(x) = \exp(-e^{x/ \sigma})$ where $\sigma=(2/T)^{1/3}$.
Observe that one can write
\begin{equation}\label{eq:theta}
\exp\left(-Se^{\frac{X^2}{2T}+\frac T{24}}\Zbbeta(T,X)\right)=\Theta_T\left(\frac{\Fsigbbeta(T,X)+\frac{X^2}{2T}+\frac T{24}}{\sigma^{-1}}-r\right).
\end{equation}
By taking expectation above
\begin{equation}\label{ETheta}\begin{aligned}
\EE\left[\Theta_T\left(\frac{\Fsigbbeta(T,X)+\frac{X^2}{2T}+\frac T{24}}{\sigma^{-1}}-r\right)\right]&=\EE\left[ \exp\left(-Se^{\frac{X^2}{2T}+\frac T{24}}\Zbbeta(T,X)\right) \right]\\
&=\fredholmm{\K{\sigma b+X/T,\sigma\beta+X/T}^{(\sigma)}}{\LRplus}\\
&\to\fredholmm{\K{BP,b+Y,\beta+Y}}{\Lrinf}
\end{aligned}\end{equation}
as $T\to\infty$ where we used Proposition~\ref{PropLaplaceFredholm} in the second equation above.
To conclude the convergence in \eqref{ETheta}, Proposition~\ref{propFredholmKonv} was used with boundary parameters $\sigma b+X/T=\sigma(b+Y)$ and $\sigma\beta+X/T=\sigma(\beta+Y)$ where $X=2^{1/3}YT^{2/3}$.

The functions $\Theta_T$ satisfy the properties of Lemma~\ref{lemmaConv},
hence by \eqref{ETheta} the lemma is applicable for the random variables $\sigma(\Fsigbbeta(T,X)+X^2/(2T)+T/24)$ and with the distribution function $F_{\BP,b+Y,\beta+Y}(r)$ defined by \eqref{defBPdistr}.
By observing that $\sigma X^2/(2T)=Y^2$ and by substituting $r$ by $r+Y^2$, one arrives to \eqref{eq:timeLimitGen}.
\end{proof}

We are left with proving Proposition~\ref{propFredholmKonv}. We use the following decay bound from~\cite{BCFV15} adapted to the present setting.

\begin{lemma}\label{lemmaKernelEstGen}\cite[Lemma~B.4]{BCFV15}
Fix $b_1\leq b_2 \leq \dots \leq b_m < \beta_1 \leq \beta_2 \dots \leq \beta_n$ so that $\beta_i - b_j < 1$ for any $1\leq i \leq n$ and $1\leq j \leq m$.
Then there is a finite constant $C$ such that for any $x,y\in\Rplus$
\begin{equation}
\left|\Kbbeta(x,y)\right| \leq C\exp\left( -\frac{\beta_1}{\sigma}y + \frac{b_m}{\sigma}x \right).
\end{equation}
\end{lemma}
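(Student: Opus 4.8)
The plan is to read the two exponential rates $e^{-\beta_1 y/\sigma}$ and $e^{b_m x/\sigma}$ directly off the kernel \eqref{defKbbeta} by deforming the $w$- and $z$-contours so that $\Re(z)\ge\beta_1/\sigma$ holds along the whole $z$-contour and $\Re(w)\le b_m/\sigma$ holds along the whole $w$-contour. Once this geometry is arranged, for $x,y\in\Rplus$ one has $|e^{-zy}|=e^{-\Re(z)y}\le e^{-\beta_1 y/\sigma}$ and $|e^{wx}|=e^{\Re(w)x}\le e^{b_m x/\sigma}$ pointwise on the contours, so these two factors can be pulled out of the absolute value of the integrand and the remaining double integral bounded by a constant $C$ independent of $x$ and $y$. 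All contour moves are justified by the residue theorem, the arcs at infinity between the old and new contours being killed by the Gaussian-type decay of the cubic exponentials.

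First I would set up the deformation. The only pole of the $z$-integrand lying to the right of the original $z$-contour and near the real axis comes from $\prod_k\Gamma(\beta_k-\sigma z)$, whose leftmost pole sits exactly at $z=\beta_1/\sigma$; the original contour passes just to its left. I would push the $z$-contour rightward so that near the real axis it runs along $\Re(z)=\beta_1/\sigma+\varepsilon'$, just past that pole, and opens into the sectors $\arg(z-\beta_1/\sigma)=\pm\pi/3$ at infinity, where $\Re(z^3)\to-\infty$ makes $e^{z^3/3}$ decay and the $z$-integral converge; such a contour satisfies $\Re(z)\ge\beta_1/\sigma$ everywhere. Symmetrically, $\prod_l\Gamma(\sigma w-b_l)$ has its rightmost pole at $w=b_m/\sigma$, and I would push the $w$-contour leftward to run along $\Re(w)=b_m/\sigma-\varepsilon'$ near the real axis, opening into the sectors (asymptotic angles $\pm2\pi/3$) where $\Re(-w^3/3)\to-\infty$, so that $\Re(w)\le b_m/\sigma$ throughout. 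Crossing these two poles produces residue terms: the residue of $\Gamma(\beta_k-\sigma z)$ at $z=\beta_1/\sigma$ carries the factor $e^{-\beta_1 y/\sigma}$ and the residue of $\Gamma(\sigma w-b_l)$ at $w=b_m/\sigma$ carries $e^{b_m x/\sigma}$, so the main double integral, the two single-integral residue terms, and the double-residue term all obey the claimed bound after the exponentials are extracted.

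I then have to check that the deformation crosses no other pole. The gamma poles all lie on the real axis, so for $\varepsilon'$ small the rightward and leftward pushes meet no further gamma poles off the real line (coincidences among the $\beta_k$ equal to $\beta_1$, or the $b_l$ equal to $b_m$, only raise the order of the single residue and are harmless). The poles of $1/\sin(\sigma\pi(z-w))$ occur at $z-w\in\frac1\sigma\Z$, hence only where $\Im(z-w)=0$, i.e.\ near the real axis; there $\Re(z-w)$ stays in $\bigl(0,(\beta_1-b_m)/\sigma+2\varepsilon'\bigr)$, and the hypothesis $\beta_i-b_j<1$ forces $\beta_1-b_m<1$, so for $\varepsilon'$ small this interval lies strictly inside $(0,1/\sigma)$, no sine pole $z-w=j/\sigma$ with $j\ge1$ is met, and $j=0$ is avoided by keeping the contours disjoint. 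This is exactly where the constraint $\beta_i-b_j<1$ is used.

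Finally I would bound the remaining integrals by a finite constant. After extracting $e^{-\beta_1 y/\sigma+b_m x/\sigma}$, the only residual $x,y$-dependence is through $e^{-(\Re(z)-\beta_1/\sigma)y}\le1$ and $e^{(\Re(w)-b_m/\sigma)x}\le1$, which I simply bound by $1$. On the deformed contours the cubic exponentials give Gaussian decay in $|\Im(z)|$ and $|\Im(w)|$, while the ratio of gamma functions and the factor $\sigma\pi S^{\sigma(z-w)}/\sin(\sigma\pi(z-w))$ grow at most exponentially in $|\Im(z)|$ and $|\Im(w)|$ by the large-imaginary-argument asymptotics of $|\Gamma(a+\I b)|$ and of $|\sin|$; the Gaussian decay dominates, so each integral converges to a finite $x,y$-independent constant, and likewise the residue contours and the evaluated residues are finite. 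Collecting the pieces yields $|\Kbbeta(x,y)|\le C e^{-\beta_1 y/\sigma+b_m x/\sigma}$. I expect the main obstacle to be the clean bookkeeping of the residues generated when the contours are pushed past $z=\beta_1/\sigma$ and $w=b_m/\sigma$, together with the verification that no sine pole is crossed during the push; this last point is precisely the one controlled by the hypothesis $\beta_i-b_j<1$.
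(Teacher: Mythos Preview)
The paper does not give its own proof of this lemma; it is simply quoted from \cite[Lemma~B.4]{BCFV15}. So there is nothing in the present paper to compare your argument against, and I will just comment on the soundness of your sketch.

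Your overall strategy---push the $z$-contour right of $\beta_1/\sigma$ and the $w$-contour left of $b_m/\sigma$, extract $e^{-\beta_1 y/\sigma}$ and $e^{b_m x/\sigma}$, and bound the rest uniformly---is the natural one, and the r\^ole you assign to the hypothesis $\beta_i-b_j<1$ (controlling the first sine pole $z-w=1/\sigma$) is correct. There is, however, a real gap in your treatment of the sine poles once you choose asymptotic directions $\pm\pi/3$ for the $z$-contour and $\pm2\pi/3$ for the $w$-contour. You claim that $\Im(z-w)=0$ forces both $z$ and $w$ to be near the real axis, but with those angles the two contours have the \emph{same} imaginary slope at infinity: writing $z=\beta_1/\sigma+r_1e^{\I\pi/3}$ and $w=b_m/\sigma+r_2e^{\I2\pi/3}$ one finds $\Im(z-w)=(r_1-r_2)\sqrt3/2$, so $\Im(z-w)=0$ whenever $r_1=r_2=r$, and then $\Re(z-w)=(\beta_1-b_m)/\sigma+r$. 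Hence for every integer $j\ge1$ there is a pair on the final contours with $z-w=j/\sigma$, i.e.\ the sine factor has a pole there; worse, during the deformation from the original vertical contours you sweep across infinitely many such poles at large height. Your claim that ``no sine pole is crossed during the push'' therefore fails for this particular geometry.

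The fix is to keep the two contours vertical (or at least with bounded $\Re(z-w)$) at infinity rather than opening them into sectors. For instance, take the $z$-contour to coincide with $\Re(z)=\beta_1/\sigma+\varepsilon'$ in a bounded window around the real axis and then return to a fixed vertical line $\Re(z)=c_z>0$ at large $|\Im z|$; similarly for $w$ with a fixed line $\Re(w)=c_w<0$. Choosing $c_z-c_w<1/\sigma$ and $\beta_1-b_m+2\sigma\varepsilon'<1$ (possible by hypothesis) keeps $\Re(z-w)\in(0,1/\sigma)$ along the entire product contour, so no sine pole is encountered. On the vertical tails the cubic exponentials still give Gaussian decay in $|\Im z|,|\Im w|$, the gamma ratios grow at most polynomially there, and near the real axis your residue bookkeeping for the crossings at $z=\beta_1/\sigma$ and $w=b_m/\sigma$ goes through as you described. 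With this adjustment your argument closes.
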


\begin{proof}[Proof of Proposition~\ref{propFredholmKonv}]
By setting $S=e^{-r/\sigma}$, the kernel on the left-hand side of \eqref{sigmafredholmconv} reads as
\begin{equation}\label{kernelsigma}
\Ksigbbeta(x,y)=\frac1{(2\pi\I)^2}\int\d w\int\d z \frac{\sigma\pi e^{-r(z-w)}}{\sin(\sigma\pi(z-w))} \frac{e^{z^3/3-zy}}{e^{w^3/3-wx}}
\prod_{l=1}^{m} \frac{\Gamma(\sigma(w-b_l))}{\Gamma(\sigma(z-b_l))}\prod_{k=1}^{n}\frac{\Gamma(\sigma(\beta_k-z))}{\Gamma(\sigma(\beta_k-w))}.
\end{equation}
Then the first factor in the double integral in \eqref{kernelsigma} converges to $e^{-r(z-w)}/(z-w)$ as $\sigma\to0$.
For the product of the gamma ratios,
\begin{equation}
\prod_{l=1}^m\frac{\Gamma(\sigma(w-b_l))}{\Gamma(\sigma(z-b_l))}\prod_{k=1}^n\frac{\Gamma(\sigma(\beta_k-z))}{\Gamma(\sigma(\beta_k-w))}
\to\prod_{l=1}^m\frac{z-b_l}{w-b_l}\prod_{k=1}^n\frac{w-\beta_k}{z-\beta_k}
\end{equation}
as $\sigma\to0$.
Hence the integrand in \eqref{kernelsigma} converges to that of $\K{\BP,b,\beta}(x+r,y+r)$ given in \eqref{defBPkernel} as $\sigma\to0$.
Since along the contours for $w$ and $z$, the factors $e^{z^3/3-w^3/3}$ in \eqref{kernelsigma} have fast enough decay, we conclude that
\begin{equation}\label{kernelConvBP}
\lim_{\sigma\to0}\Ksigbbeta(x,y)=\K{\BP,b,\beta}(x+r,y+r).
\end{equation}

To show that the convergence of the kernels \eqref{kernelConvBP} implies the convergence of Fredholm determinants \eqref{sigmafredholmconv}, one uses dominated convergence.
Lemma~\ref{lemmaKernelEstGen} applied to $\Ksigbbeta$ provides a uniform upper bound in $\sigma$.
Using this upper bound, the $n$th term in the Fredholm determinant expansion of the left-hand side of \eqref{sigmafredholmconv} is bounded by
\begin{equation}\label{fredholmbound}\begin{aligned}
&\frac{1}{n!} \int_{\Rplus} \dots \int_{\Rplus} \det\left[\Ksigbbeta(x_i,x_j)\right]_{i,j=1}^{n} \d x_1\dots\d x_n\\
&\qquad\leq\frac{C^{2n}n^{n/2}}{n!} \int_{\Rplus} \dots \int_{\Rplus} e^{-(\beta_1-b_m)\sum_{j=1}^{n} x_j} \d x_1\dots\d x_n\\
&\qquad=\frac{C^{2n}n^{n/2}}{(\beta_1-b_m)^nn!}
\end{aligned}\end{equation}
where we also used the Hadamard bound in the first inequality above.
Since the right-hand side of \eqref{fredholmbound} is summable, dominated convergence implies \eqref{sigmafredholmconv}.
\end{proof}


\end{document}